\newcommand{\dd}{\mathop{}\!\mathrm{d}}
\theoremstyle{plain}
\newtheorem{theorem}{Theorem}[section]
\newtheorem{proposition}[theorem]{Proposition}
\newtheorem{remark}[theorem]{Remark}
\numberwithin{equation}{section}
\numberwithin{table}{section}
\numberwithin{figure}{section}
\title{Piecewise orthogonal collocation for computing periodic solutions of renewal equations}
\author{
Alessia And\`o$^{1,2,3}$ and Dimitri Breda$^{2,4}$\\[.5em]
\small $^{1}$Area of Mathematics, Gran Sasso Science Institute\\[-.2em]
\small via Francesco Crispi 7, 67100 L'Aquila, Italy\\[.5em]
\small $^{2}$CDLab -- Computational Dynamics Laboratory\\[-.2em]
\small Department of Mathematics, Computer Science and Physics -- University of Udine\\[-.2em]
\small via delle scienze 206, 33100 Udine, Italy\\[.5em]
\small $^{3}$\texttt{alessia.ando@gssi.it}\\[.5em]
\small $^{4}$\texttt{dimitri.breda@uniud.it}
}
\date{\today}
\begin{document}
\clearpage
\maketitle
\thispagestyle{empty}

\begin{abstract}We extend the use of piecewise orthogonal collocation to computing periodic solutions of renewal equations, which are particularly important in modeling population dynamics. We prove convergence through a rigorous error analysis. Finally, we show some numerical experiments confirming the theoretical results, and a couple of applications in view of bifurcation analysis.
\end{abstract}

\smallskip
\noindent {\bf{Keywords:}} renewal equations, periodic solutions, boundary value problems, piecewise orthogonal collocation, finite element method, population dynamics

%%\pacs[JEL Classification]{D8, H51}

\smallskip
\noindent{\bf{2010 Mathematics Subject Classification:}} 65L03, 65L10, 65L20, 65L60, 92D25

\section{Introduction}\label{s_introduction}

Including delays within models is often a sound way to describe the relevant phenomena more realistically. Indeed, in various fields of science - such as population dynamics or epidemiology - delays between a cause and the corresponding effects appear rather naturally, which brings the need to resort to delay equations in order to capture adequately the dependence on the past \cite{md86,smith11}.

In many applications, the main interest is towards the dynamical analysis of the  concerned models, including the computation of invariant sets (such as equilibria and periodic solutions) and the study of their asymptotic stability. Regarding periodic solutions, the piecewise orthogonal collocation method to compute those of Renewal Equations (REs) has been first applied in \cite{bdls16}, although the method is not even described therein. Indeed, a formal description appeared first in \cite{aa20} and \cite{ab19} where its validity is only shown by means of some numerical experiments.

The aim of the present paper is to give a detailed illustration of the method and provide a rigorous convergence analysis, having in mind the vast presence of REs in the field of population dynamics \cite{bdgpv12,feller1941,iannelli94,lotka1939}. The convergence analysis follows the main ideas used in \cite{ab20mas} for Retarded Functional Differential Equations (RFDEs), which is in turn based on the abstract approach in \cite{mas15NM}. \cite{ab20mas} represents indeed the solution of the long-standing problem of lack of a rigorous proof of convergence for RFDEs. The contribution of the present paper consists in extending the piecewise collocation and its convergence to REs, by tackling the nontrivial challenges due to the inherent differences concerning the class of equations and the relevant spaces of functions. While some of such challenges are mostly technical, the more consistent ones bring the necessity to resort to the theory of resolvents for Volterra Integral Equations (VIEs, see Appendix \ref{s_appendix}).

We conclude this introduction with Subsection \ref{s_introre}, where we describe the equations of interest and the standard way to formulate the problem of computing periodic solutions as a Boundary Value Problem (BVP). The rest of the paper is divided into three main sections. Section \ref{s_piecewise} describes the piecewise orthogonal collocation method for REs. Section \ref{s_convergence} deals with the theoretical convergence of the method by illustrating how the relevant analysis can be based on the abstract approach in \cite{mas15NM}. An important part of the proof is collected in Appendix \ref{s_appendix}, in order to not interrupt the reading flow. Finally, Section \ref{s_results} shows the results of some numerical experiments on REs from population dynamics, also in view of bifurcation analysis. Python demos are freely available at \url{http://cdlab.uniud.it/software}.

\subsection{Renewal equations and boundary value problems}
\label{s_introre}
In its most general form, an RE can be written as
\begin{equation}\label{eq:RE}
\setlength\arraycolsep{0.1em}\begin{array}{rcl}
x(t)&=&F(x_{t})
\end{array}
\end{equation}
where, for a positive integer $d$, $F:\mathtt{X}\rightarrow\mathbb{R}^{d}$ is autonomous, in general nonlinear and the \emph{state space} $\mathtt{X}$ is a set of functions from $[-\tau,0]$ to $\mathbb{R}^{d}$ for some $\tau>0$, called the \emph{delay}. The \emph{state} of the dynamical system on  $\mathtt{X}$ at time $t$ associated to \eqref{eq:RE} is denoted by $x_{t}$, defined as $x_{t}(\theta):=x(t+\theta),\,\theta\in[-\tau,0]$.
In particular, we develop our analysis for $\mathtt{X}:=B^{\infty}([-\tau, 0],\mathbb{R}^d)$, where $B^{\infty}$ denotes bounded and measurable functions. The elements of $B^{\infty}$ are considered as functions, not as classes of functions that are equal almost everywhere. Such choice, instead of the classical $L^1([-\tau,0],\mathbb{R}^d)$ \cite{diekmann95}, is justified by the need of evaluating the functions pointwise in order to deal with collocation.
A (non-constant) periodic solution of \eqref{eq:RE} with period $\omega>0$  \footnote{ We use the letter $\omega$ to indicate the period, following the notation of \cite{hvl93}.}, if there is any, can be obtained by solving a BVP where the solutions are considered over just one period and the periodicity is imposed to the solution values at the extrema of the period.
Note that this requires to evaluate $x$ at points that fall off the interval $[0,\omega]$, due to the delay. In order to deal with this issue, one can exploit the periodicity to bring back the evaluation to the domain $[0,\omega]$, which, assuming $\omega\geq \tau$\footnote{ A solution with period $\omega$ is also a solution with period $k\omega$ for any positive integer $k$. Moreover, the stability of the relevant hyperbolic orbit is preserved and $t+\theta\geq -\omega$ holds for all $t\in[0,\omega]$.\label{tauleqomega}}, means defining the function $\overline{x}_t\in \mathtt{X}$ as

\begin{equation}\label{perstate}
\overline{x}_t(\theta):=\left\{\setlength\arraycolsep{0.1em}\begin{array}{ll}
x(t+\theta),  &\quad t+\theta\in[0,\omega], \\[1mm]
x(t+\theta+\omega), &\quad t+\theta\in[-\omega,0). \\[1mm]
\end{array}
\right.
\end{equation}
 The periodic BVP can then be formulated as
\begin{equation}\label{barBVPRE}
\left\{\setlength\arraycolsep{0.1em}\begin{array}{rcll}
x(t) &=&  F(\overline{x}_t),&\quad t\in[0,\omega], \\[1mm]
x(0) &=&x(\omega)&\\[1mm]
p(x)&=&0,&
\end{array}
\right.
\end{equation} 
where $p$ is a scalar (usually linear) function defining a so-called \emph{phase condition}, necessary to remove translational invariance. \cite{doe07}. This is the most natural BVP formulation of the problem following the case of RFDEs, e.g., \cite{bad85,bkw06,bz84,elir00,liu94,mas15I,rt74}, as well as the most convenient to consider when developing the numerical method. However, we will introduce in Section \ref{s_convergence} a slightly different, yet equivalent, formulation in view of the convergence analysis based on \cite{mas15NM}.
%, necessary to remove translational invariance. An example of phase condition is the \emph{trivial} one, of the form {\color{red} $x(0)_k=\hat{x}$} for some $k\in\{1,\ldots,d\}$, where $\hat{x}$ is fixed. Otherwise, an \textit{integral} phase condition is of the form
%\begin{equation*}
%\int_0^{\omega} \langle x(t), \tilde{x}'(t)\rangle \dd{t}=0,
%\end{equation*}
%where $\tilde{x}$ is a given reference solution \cite{doe07}.

\bigskip
\noindent In realistic models, such as those describing structured populations, $F$ has usually the form
\begin{equation}\label{hpRHS}
F(\alpha)=\int_{-\tau}^0K(\theta,\alpha(\theta))\dd\theta
\end{equation}
for some integration kernel $K:[-\tau,0]\times\mathbb{R}^d\to\mathbb{R}^d$, or
\begin{equation}\label{altRHS}
F(\alpha)=f\left(\int_{0}^{\tau}k(\sigma)\alpha(-\sigma)\dd\sigma\right),
\end{equation}
for some integration kernel $k:[0,\tau]\to \mathbb{R}^d$ and some function $f:\mathbb{R}^d\to\mathbb{R}^d$. The analysis that follows focuses on \eqref{hpRHS}, even though its validity can be also extended to \eqref{altRHS} (see Remark \ref{r_altRHS} in Section \ref{s_convergence}).
As anticipated previously in the present section, the fundamental differences between the REs we considered and the RFDEs are not limited to their roles in mathematical modeling. Indeed, the theory of REs is typically developed in broader spaces such as $L^1$ \cite{md86,dgg07}, resulting, in principle, in a lower degree of differentiability of the relevant solution.
%-----------------------------------------------------------------------------
\section{Piecewise orthogonal collocation}
\label{s_piecewise}
This section describes the numerical method used to compute periodic solutions of \eqref{eq:RE}, starting from a general right-hand side $F$ which, for the moment, is assumed to be computable without resorting to any further numerical approximations.

\bigskip
\noindent Since the period $\omega$ is unknown, it is numerically convenient (see, e.g., \cite{elir00}) to reformulate \eqref{barBVPRE} through the map $s_{\omega}:\mathbb{R}\to\mathbb{R}$ defined by $s_{\omega}(t):=t/\omega$. \eqref{barBVPRE} is thus equivalent to
\begin{equation}\label{rescaledBVPRE}
\left\{\setlength\arraycolsep{0.1em}\begin{array}{rcll}
x(t) &=&  F(\overline{x}_t\circ s_{\omega}),&\quad t\in[0,1], \\[1mm]
x(0) &=&x(1)&\\[1mm]
p(x)&=&0,&
\end{array}
\right.
\end{equation}
the solution of which is intended to be defined in $[0,1]$.

\bigskip
\eqref{rescaledBVPRE} can be solved numerically through \emph{piecewise} orthogonal collocation \cite{elir00}. This is particularly useful when adaptive meshes, to better follow the solution profile, might be needed, and is now a standard approach (originally developed for ODEs \cite{amr88}, see \texttt{MatCont} \cite{matcont}). It means looking for an $m$-degree piecewise continuous polynomial $u$ in $[0,1]$ and $w\in\mathbb{R}$ that satisfy the following system having dimension $(1+Lm)\times d+1$:
\begin{equation*}
\left\{\setlength\arraycolsep{0.1em}\begin{array}{rcll}
u(t_{i,j}) &=& F(\overline{u}_{t_{i,j}}\circ s_w),&\quad j\in\{1,\ldots,m\},\quad i\in\{1,\ldots,L\}, \\[1mm]
u(0) &=&u(1)&\\[1mm]
p(u)&=&0&
\end{array}
\right.
\end{equation*}
for a given mesh $0= t_0<\cdots<t_L=1$ and collocation points $\{t_{i,j}\}_{i,j}$ such that $t_{i-1}  < t_{i,1}<\cdots<t_{i,m} < t_{i}$ for all $i\in\{1,\ldots,L\}$. The unknowns are, other than $w$, those of the form $u_{i,j}:=u(t_{i,j})$  for $(i,j)=(1,0)$ and $i\in\{1,\ldots,L\}$, $j\in\{1,\ldots,m\}$\footnote{In fact, one could also consider to represent $u$ through its values at other sets of nodes as unknowns, not necessarily $\{t_{i,j}\}_{i,j}$ (see Remark \ref{r_representation} at the end of Section \ref{s_convergence}).}.
\begin{remark}
Typically, periodic solutions are computed within a continuation framework.  This provides a reasonable choice for the phase condition, necessary to ensure the actual well-posedeness of \eqref{rescaledBVPRE}.
%In that case, concerning the choice of a suitable phase condition $p$, either some $\hat{x}$ or some $\tilde x$, defined as in Section \ref{s_introduction}, is available: indeed, if the continuation starts close to a Hopf bifurcation (see, e.g., \cite[Sections 3.4 and 3.5]{kuz98}),  a coordinate of the equilibrium giving rise to it is a natural choice for $\hat x$. Alternatively, one could use an integral phase condition where $\tilde x$ is a cycle with period $2\pi/\beta$ and $\beta$ is in turn (the absolute value of) the imaginary part of the conjugate pair determining the Hopf bifurcation. This cycle is intended to represent an approximation of a periodic solution corresponding to a value of the parameter obtained by slightly perturbing the Hopf one, and a reasonable guess for the amplitude is given by $\sqrt{\alpha}$, where $\alpha$ is the real part of the aforementioned conjugate pair at the perturbed value of the parameter. On the other hand, at the subsequent continuation steps, $\tilde x$ can be defined as a component of the periodic solution computed at the previous continuation step.
\end{remark}
\bigskip
As mentioned at the end of Subsection \ref{s_introre}, in applications from population dynamics right-hand sides usually feature an integral, therefore cannot be exactly computed in general. This is also the case of \eqref{hpRHS}, which reads, once the time has been rescaled,
\begin{equation}\label{hprescaled}
F(x_t\circ s_{\omega})=\int_{-\frac{\tau}{\omega}}^0\omega K(\omega\theta,x_t(s_{\omega}(\omega\theta)))\dd\theta=\omega\int_{-\frac{\tau}{\omega}}^0 K(\omega\theta,x(t+\theta))\dd\theta,
\end{equation}
where now $x_t\in X:=B^{\infty}([-1,0],\mathbb{R}^d)$. Observe that, although the corresponding natural state space is in fact a Banach space of functions defined in $[-\tau/\omega,0]$, one could choose spaces of functions defined in $[-r,0]$ for any $1\geq r\geq\tau/\omega$\footnote{ The extension of the state space to $[-1,0]$ is necessary since $\omega$ varies within an iterative procedure to find a numerical solution while the space needs to be fixed, as is required for the forthcoming analysis. Observe that such extension does not affect the dynamics: indeed, initial states that only differ in $[-1,-\tau/\omega]$ lead to the same orbits, but different orbits cannot cross.}.

\bigskip
Assuming that $K$ can be exactly computed, which is usually the case in applications, the approximation of \eqref{hprescaled} through quadrature as
\begin{equation*}
F_M(x_t\circ s_{\omega}):=\omega \sum_{i=0}^Mw_i K(\omega\eta_i,x(t+\eta_i)), 
\end{equation*}
where $M$ is a given approximation level and $-\tau/\omega\leq\eta_0<\cdots<\eta_M\leq 0$, can also be exactly computed. Such an approximation corresponds to the \emph{secondary discretization} introduced in Subsection \ref{sub_numerical} and used in the convergence analysis that follows. The nodes $\eta_0,\ldots,\eta_M$ and the corresponding weights $w_0,\ldots,w_M$ are meant to define a suitable quadrature formula by exploiting possible irregularities in $K$, meaning that their choice does not need to be made a priori. Moreover, note that the quadrature nodes vary together with $\omega$, since the latter is unknown. In particular, they are completely independent of the collocation nodes mentioned earlier.
\section{Convergence analysis}
\label{s_convergence}
%-----------------------------------------------------------------------------
This section describes the theoretical convergence analysis of the numerical method described in Section \ref{s_piecewise}, following the abstract approach \cite{mas15NM}. In particular, the convergence analysis that follows applies to the Finite Element Method (FEM), which consists in letting $L\to\infty$ while keeping $m$ fixed and is the classical approach considered in practical applications (e.g., in \texttt{MatCont} \cite{matcont} or some versions of \texttt{DDE-Biftool} \cite{ddebiftool,sie15}).
A few comments on the Spectral Element Method (SEM, $m\to\infty$ while keeping $L$ fixed) will follow in Subsection \ref{s_numerical}.

\bigskip
\noindent Following the approach for RFDEs in \cite{ab20mas}, we first reformulate \eqref{rescaledBVPRE} as
\begin{equation}\label{convBVPRE}
\left\{\setlength\arraycolsep{0.1em}\begin{array}{rcll}
x(t) &=& F(x_t\circ s_{\omega}),&\quad t\in[0,1], \\[1mm]
x_0 &=& x_1&\\[1mm]
p(x\vert_{[0,1]})&=&0,&
\end{array}
\right.
\end{equation}
i.e., by imposing the periodicity condition to the states at the extrema of the period rather than to the solution values. In this case the solution $x$ is intended as a map defined in $[-1,1]$ and there is no need to resort to \eqref{perstate}.

\bigskip
Although formulations \eqref{rescaledBVPRE} and \eqref{convBVPRE} are formally different, they are mathematically equivalent and also lead to fundamentally equivalent numerical methods. Indeed, when applying the numerical method described in Section \ref{s_piecewise} to the problem \eqref{convBVPRE} one just introduces redundant variables\footnote{A convergence analysis based on \eqref{rescaledBVPRE} can still be obtained by following the structure of the one presented here, see \cite{adena21}.}.

\bigskip
The second step consists in observing that \eqref{convBVPRE} fits into the general form  of the BVP addressed in \cite{mas15NM}:
\begin{equation}\label{generalBVP}
\begin{cases}
u=\mathcal{F}(\mathcal{G}(u,\alpha),u,\beta)\\
\mathcal{B}(\mathcal{G}(u,\alpha),u,\beta)=0.
\end{cases}
\end{equation}
Here the relevant solution $v:=\mathcal{G}(u,\alpha)$ is assumed to lie in a normed space $\mathbb{V}$ of functions $[-1,1]\to\mathbb{R}^d$, $u$ in a Banach space $\mathbb{U}$ of functions $[0,1]\to\mathbb{R}^d$, and the operator $\mathcal{G}:\mathbb{U}\times\mathbb{A}\rightarrow\mathbb{V}$ represents a linear operator which reconstructs the solution given $u$ and an initial state $\alpha$, also lying in a Banach space $\mathbb{A}$ of functions $[-1,0]\to\mathbb{R}^d$. $\beta$ is a vector of parameters living in a Banach space $\mathbb{B}$. 
The first line of \eqref{generalBVP} represents the concerned functional equation via the function $\mathcal{F}:\mathbb{V}\times\mathbb{U}\times\mathbb{B}\rightarrow\mathbb{U}$ and the second represents the boundary conditions via $\mathcal{B}:\mathbb{V}\times\mathbb{U}\times\mathbb{B}\rightarrow\mathbb{A}\times\mathbb{B}$. \footnote{Note that \cite{mas15NM} concerns RFDEs of neutral type, where $u$ plays the role of the derivative of the solution $v$. In the case of REs, however, no derivatives are involved and $\mathbb{U}$ can play the role of the space of the solution in $[0,1]$.}
\bigskip
In \cite{mas15NM}, \eqref{generalBVP} is then translated into a fixed-point problem, the so-called {\it Problem in Abstract Form} (PAF) which consists in finding $(v^{\ast},\beta^{\ast})\in \mathbb{V}\times\mathbb{B}$ with $v^{\ast}:=\mathcal{G}(u^{\ast},\alpha^{\ast})$ and $(u^{\ast},\alpha^{\ast},\beta^{\ast})\in \mathbb{U}\times\mathbb{A}\times\mathbb{B}$ such that 
\begin{equation}\label{PAF}
(u^{\ast},\alpha^{\ast},\beta^{\ast})=\Phi(u^{\ast},\alpha^{\ast},\beta^{\ast})
\end{equation}
for $\Phi:\mathbb{U}\times\mathbb{A}\times\mathbb{B}\rightarrow\mathbb{U}\times\mathbb{A}\times\mathbb{B}$ given by
\begin{equation}\label{Phi}
\Phi(u,\alpha,\beta):=
\begin{pmatrix}
\mathcal{F}(\mathcal{G}(u,\alpha),u,\beta)\\[2mm]
(\alpha,\beta)-\mathcal{B}(\mathcal{G}(u,\alpha),u,\beta)
\end{pmatrix}.
\end{equation}
In the sequel we always use the superscript $^{\ast}$ to denote quantities relevant to fixed points (which correspond to non-constant periodic solutions).

\bigskip
It follows that \eqref{convBVPRE} leads to an instance of \eqref{Phi} with $\mathcal{G}$, $\mathcal{F}$ and $\mathcal{B}$ given respectively by
\begin{equation}\label{G2}
\mathcal{G}_{}(u,\alpha)(t):=\begin{cases}
\displaystyle u(t),&t\in(0,1],\\[2mm]
\alpha(t),&t\in[-1,0],
\end{cases}
\end{equation}
\begin{equation}\label{F2}
\mathcal{F}_{}(v,u,\omega):= F(v_{(\cdot)}\circ s_{\omega})
\end{equation}
and
\begin{equation}\label{B2}
\mathcal{B}_{}(v,u,\omega):=
\begin{pmatrix}
v_{0}-v_{1}\\[2mm]
p(u)
\end{pmatrix}.
\end{equation}
The boundary operator is linear and independent of $\omega$.

\bigskip
The fact that \eqref{convBVPRE} can be rewritten as a PAF does not imply that the the convergence framework in \cite{mas15NM} can be applied either way. In fact, several assumptions are required. These include theoretical assmuptions, the validity of which depends on the choices of the spaces, as well as on the regularity of the integrand $K$ in the right-hand side \eqref{hpRHS}. Subsection \ref{sub_theoretical} includes the definitions of such assumptions and their statements as propositions, instanced according to the problems of interest. The other assumptions required concern instead the reduction of the problem to a finite-dimensional one, and will be dealt with similarly in Subsection \ref{sub_numerical}. Concerning the proofs of such propositions,  we will go through the main points, focusing on the differences with respect to the analogous propositions in the RFDE case \cite{ab20mas}, with the exception of a more complicated one to which we dedicate the entire Appendix \ref{s_appendix}.
%-----------------------------------------------------------------------------
\subsection{Theoretical assumptions}
\label{sub_theoretical}
The hypotheses on the original problem needed to prove the validity of the theoretical assumptions in \cite{mas15NM} are:
\begin{itemize}
\item[(T1)] $\mathtt{X}=B^{\infty}([-\tau,0],\mathbb{R}^{d})$, $X=B^{\infty}([-1,0],\mathbb{R}^{d})$;
\item[(T2)] $\mathbb{U}_{}=B^{\infty}([0,1],\mathbb{R}^{d})$, $\mathbb{V}_{}=B^{\infty}([-1,1],\mathbb{R}^{d})$, $\mathbb{A}_{}=B^{\infty}([-1,0],\mathbb{R}^{d})$;
\item[(T3)] $K:\mathbb{R}\times\mathbb{R}^d\to\mathbb{R}^{d}$ is piecewise continuous and has partial derivatives $D_1K,\,D_2K$ which are measurable with respect to both their arguments;
\item[(T4)] the map $x\mapsto D_2K(r,x)$ is piecewise continuous for all $r\in \mathbb{R}$;
\item[(T5)] there exist $r>0$ and $\kappa\geq0$ such that
\begin{equation*}
\left\{\setlength\arraycolsep{0.1em}\begin{array}{rcl}
\|D_1K(\omega\cdot,v_t)-D_1K(\omega^*\cdot,v^{\ast}_{t})\|_{\mathbb{R}^{d}\leftarrow \mathbb{R}}\leq\kappa\|(v_t,\omega)-(v^{\ast}_{t}, \omega^{\ast})\|_{X\times\mathbb{R}}\\
\|D_2K(\omega\cdot,v_t)-D_2K(\omega^*\cdot,v^{\ast}_{t})\|_{\mathbb{R}^{d}\leftarrow \mathbb{R}}\leq\kappa\|(v_t,\omega)-(v^{\ast}_{t}, \omega^{\ast})\|_{X\times\mathbb{R}}
\end{array}\right.
\end{equation*}
for every $(v_t,\omega)\in b((v^{\ast}_{t},\omega^{\ast}),r)$\footnote{$b(c,r)$ denotes the closed ball centered in $c$ having radius $r$.}, uniformly with respect to $t\in[0,1]$.
\end{itemize}
\bigskip
The first theoretical assumption (A$\mathfrak{F}\mathfrak{B}$, \cite[page 534]{mas15NM}) concerns the Fr\'echet-differentiability of the operators $\mathcal{F}$ and $\mathcal{B}$ appearing in \eqref{Phi}. Since $p$ is linear, so is $\mathcal{B}$ in \eqref{B2}, hence the latter is Fr\'echet-differentiable. The validity of the assumption is thus a direct consequence of the following.

\begin{proposition}\label{p_Afb}
Under (T1), (T2) and (T3), $\mathcal{F}_{}$ in \eqref{F2} is Fr\'echet-differentiable,  from the right with respect to $\omega$, at every $(\hat v,\hat u,\hat\omega)\in\mathbb{V}_{}\times\mathbb{U}_{}\times(0,+\infty)$ and
\begin{equation}\label{DF2}
D\mathcal{F}_{}(\hat v,\hat u,\hat\omega)(v,u,\omega)=\mathfrak{L}_{}(\cdot;\hat v,\hat\omega)[v_{(\cdot)}\circ s_{\hat\omega}]+\omega\mathfrak{M}_{}(\cdot;\hat v,\hat\omega)
\end{equation}
for $(v,u,\omega)\in\mathbb{V}_{}\times\mathbb{U}_{}\times(0,+\infty)$, where, for $t\in[0,1]$,
\begin{equation}\label{L2}
\mathfrak{L}_{}(t;\hat v,\hat\omega)[v_t\circ s_{\hat\omega}]:=\hat\omega\int_{-\frac{\tau}{\hat\omega}}^0D_2K(\hat\omega\theta, \hat v(t+\theta))v(t+\theta)\dd\theta
\end{equation}
and
\begin{equation}\label{M2}
\setlength\arraycolsep{0.1em}\begin{array}{rl}
\mathfrak{M}_{}(t;v,\omega):=&\displaystyle\int_{-\frac{\tau}{\omega}}^0 K(\omega\theta, v(t+\theta))\dd\theta-\displaystyle\frac{\tau}{\omega} K\left(-\tau,v\left(t-\frac{\tau}{\omega}\right)\right)\\[2mm]
+&\displaystyle\omega\int_{-\frac{\tau}{\omega}}^0D_1K(\omega\theta,v(t+\theta))\theta\dd\theta.
\end{array}
\end{equation}
\end{proposition}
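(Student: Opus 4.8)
The plan is to compute the Fréchet derivative of $\mathcal{F}_{}$ directly from its definition \eqref{F2}--\eqref{hprescaled}, splitting the increment into the part coming from the dependence on $v$ and the part coming from the dependence on $\omega$. Fix $(\hat v,\hat u,\hat\omega)\in\mathbb{V}_{}\times\mathbb{U}_{}\times(0,+\infty)$. Since $\mathcal{F}_{}$ does not depend on $u$ at all, and since by \eqref{hprescaled} we have, for each $t\in[0,1]$,
\begin{equation*}
\mathcal{F}_{}(v,u,\omega)(t)=\omega\int_{-\frac{\tau}{\omega}}^0 K(\omega\theta,v(t+\theta))\dd\theta,
\end{equation*}
the first step is to treat the two variables separately: for the $v$-direction, differentiate under the integral sign using the chain rule and \ref{T3} (existence of $D_2K$), which produces the candidate $\mathfrak{L}_{}$ in \eqref{L2}; for the $\omega$-direction, the integrand, the multiplicative factor $\omega$, and the lower endpoint $-\tau/\omega$ all depend on $\omega$, so differentiating (using the fundamental theorem of calculus for the moving endpoint, and $D_1K$ from \ref{T3} for the integrand) produces exactly the three terms of $\mathfrak{M}_{}$ in \eqref{M2} — the endpoint term $-\frac{\tau}{\omega}K(-\tau,v(t-\frac{\tau}{\omega}))$ coming from $\frac{d}{d\omega}(-\tau/\omega)=\tau/\omega^2$ times $\omega$ times the integrand evaluated at $\theta=-\tau/\omega$, and the $\theta\dd\theta$ term from $\frac{d}{d\omega}K(\omega\theta,\cdot)=D_1K(\omega\theta,\cdot)\theta$. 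Note one must be careful that $\mathcal{F}_{}$ is only differentiable \emph{from the right} in $\omega$: the state space $X$ is fixed as $L^1([-1,0])$ resp. $B^\infty([-1,0])$, but $v(t+\theta)$ is only controlled for $t+\theta\ge -\tau/\omega$ when $\tau/\omega\le 1$; decreasing $\omega$ could violate this, which is why only one-sided differentiability is claimed — I would point to the footnote/remark that $\tau\le\omega$ is assumed and handle the one-sidedness explicitly.

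The second step is to verify that the candidate derivative, call it $A(v,u,\omega):=\mathfrak{L}_{}(\cdot;\hat v,\hat\omega)[v_{(\cdot)}\circ s_{\hat\omega}]+\omega\mathfrak{M}_{}(\cdot;\hat v,\hat\omega)$, is indeed a bounded linear operator $\mathbb{V}_{}\times\mathbb{U}_{}\times\R\to\mathbb{U}_{}$. Linearity in $(v,u,\omega)$ is clear by inspection of \eqref{L2} and the factor $\omega$ in front of $\mathfrak{M}_{}$; for boundedness one uses that $\hat v\in B^\infty$, that $K$ and its first derivatives are continuous hence bounded on the compact set $[-\tau,0]\times\overline{\mathrm{range}(\hat v)}$ (here \ref{T2}/\ref{T1} and \ref{T3} are used), so $\|\mathfrak{L}_{}(t;\hat v,\hat\omega)[v_{(\cdot)}\circ s_{\hat\omega}]\|\lesssim \|v\|_{B^\infty}$ uniformly in $t$, and similarly $\|\mathfrak{M}_{}(t;\hat v,\hat\omega)\|$ is bounded uniformly in $t$; hence $A$ maps into $\mathbb{U}_{}=B^\infty([0,1],\R^d)$ with a bound independent of $t$.

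The third and genuinely technical step — which I expect to be the main obstacle — is the remainder estimate: showing
\begin{equation*}
\frac{\|\mathcal{F}_{}(\hat v+v,\hat u+u,\hat\omega+\omega)-\mathcal{F}_{}(\hat v,\hat u,\hat\omega)-A(v,u,\omega)\|_{\mathbb{U}_{}}}{\|(v,u,\omega)\|_{\mathbb{V}_{}\times\mathbb{U}_{}\times\R}}\longrightarrow 0
\end{equation*}
as $\|(v,u,\omega)\|\to 0$ with $\omega\ge 0$. The difficulty is that the perturbation of $\omega$ simultaneously shifts the integration domain $[-\tau/(\hat\omega+\omega),0]$ and the rescaled argument; the standard trick is to change variables so the domain is fixed (e.g.\ substitute $\theta=-\frac{\tau}{\hat\omega+\omega}\xi$, $\xi\in[0,1]$, or compare over the common sub-interval and estimate the sliver $[-\tau/\hat\omega,-\tau/(\hat\omega+\omega)]$ of length $O(\omega)$ separately using the $B^\infty$-bound on $K$), then apply a first-order Taylor expansion of $K$ with integral remainder in both arguments. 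The Lipschitz-type control on $D_1K,D_2K$ supplied by \ref{T5}, together with the measurability from \ref{T4} (needed so the integrands defining $\mathfrak{L}_{},\mathfrak{M}_{}$ are well-defined as $\R^d$-valued integrals), is exactly what makes the Taylor remainder $o(\|(v,\omega)\|)$ \emph{uniformly in} $t\in[0,1]$ — uniformity in $t$ is essential because the norm on $\mathbb{U}_{}$ is the sup over $t$. I would organize this as: (i) reduce to a fixed domain; (ii) Taylor-expand $K(\omega'\theta,\cdot)$ around $(\hat\omega\theta,\hat v(t+\theta))$, collecting the linear terms into $\mathfrak{L}_{}$ and $\omega\mathfrak{M}_{}$ and the quadratic/cross terms into the remainder; (iii) bound the remainder by $\kappa\,\|(v,\omega)\|^2$-type quantities via \ref{T5}, uniformly in $t$; (iv) note $u$ does not enter, so its presence in the denominator only helps. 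Throughout, the contrast with the RFDE case of \cite{ab20mas} is that here there is no derivative of the solution and the awkward term is the $\omega$-dependent moving endpoint rather than a neutral term — I would flag that this is the place where the RE analysis genuinely diverges from the RFDE one, which is presumably why the details are deferred to Appendix~\ref{s_appendix}.
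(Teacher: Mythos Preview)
Your approach is essentially the same as the paper's: both verify the remainder estimate directly, and the paper uses exactly your second option for the moving endpoint --- it writes the increment $\mathcal{F}(\hat v+v,\hat u+u,\hat\omega+\omega)(t)-\mathcal{F}(\hat v,\hat u,\hat\omega)(t)$ as $A(t)+B(t)+C(t)$, where $A(t)$ is the $\hat\omega$-weighted integral over the sliver $[-\tau/\hat\omega,-\tau/(\hat\omega+\omega)]$ (handled via the fundamental theorem of calculus and a first-order expansion of $-\tau/(\hat\omega+\omega)$), $B(t)$ is the change of integrand over the fixed interval $[-\tau/\hat\omega,0]$, and $C(t)$ carries the extra multiplicative factor $\omega$.

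One point to correct: in your third step you invoke \ref{T4} and \ref{T5}, but the proposition is stated only under \ref{T1}--\ref{T3}, and the paper's proof uses only \ref{T3}. The uniform-in-$t$ little-$o$ in the Taylor remainder of $K$ comes from the differentiability of $K$ itself, which is uniform on the bounded set where $(\hat\omega\theta,\hat v(t+\theta))$ ranges (since $\hat v\in B^{\infty}$ and $\theta$ lies in a compact interval); no Lipschitz control on $D_1K,D_2K$ is needed here. The hypotheses \ref{T4} and \ref{T5} are reserved for Propositions~\ref{p_Ax*1} and~\ref{p_Ax*2}.
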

\begin{proof} 
The proof is rather technical and goes as that of \cite[Proposition 2.1]{ab20mas}, therefore we avoid to repeat all the steps for brevity and to better concentrate on the differences. Basically, the expression \eqref{DF2}, defined through \eqref{L2} and \eqref{M2}, is directly proven to satisfy the definition of differentiable function according to \cite[Definition 1.1.1]{ampr95}. It is worth pointing out that assuming an integral right-hand side such as \eqref{hpRHS}, which is anyway typical in applications from population dynamics, is crucial for this proposition in the case of REs. Basically, for the thesis to hold it is required that the right-hand side always lies in a more regular space than $\mathbb{U}$ (which is always the case for RFDEs, where $\mathbb{U}$ plays the role of the space of the derivatives). This can be observed by looking at the last addend of \cite[(2.12)]{ab20mas}, where the derivative of the state of an element of $\mathbb{V}$ appears as a factor. Without any assumption whatsoever on $F$ the same would happen in Proposition \ref{p_Afb}, which would be a problem since $\mathbb{V}$ is as regular as $\mathbb{U}$ in the present case.\end{proof}

\bigskip
The second theoretical assumption (A$\mathfrak{G}$, \cite[page 534]{mas15NM}) concerns the boundedness of $\mathcal{G}$ defined in \eqref{G2}. The following proposition concerns its validity, and its proof is an immediate consequence of the definition \eqref{G2}.

\begin{proposition}\label{l_G}
Under (T2), $\mathcal{G}_{}$ is bounded.
\end{proposition}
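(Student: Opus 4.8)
The plan is to use that $\mathcal{G}$ in \eqref{G2} is a \emph{linear} operator in the pair $(u,\psi)$, so that its boundedness is equivalent to the existence of a finite operator norm, and then to read off such a bound directly from the piecewise definition together with the fact that, under \ref{T2}, the spaces $\mathbb{U}$, $\mathbb{V}$, $\mathbb{A}$ carry the sup norm. A preliminary point to settle is well-definedness: given $(u,\psi)\in\mathbb{U}\times\mathbb{A}$, the function $\mathcal{G}(u,\psi)$ coincides with $u$ on the measurable set $(0,1]$ and with $\psi$ on the measurable set $[-1,0]$, which partition $[-1,1]$; since $u$ and $\psi$ are measurable on their respective domains, the glued function is measurable on $[-1,1]$, and it is bounded by the estimate below. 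Hence $\mathcal{G}(u,\psi)\in B^{\infty}([-1,1],\R^{d})=\mathbb{V}$, so the map indeed takes values in $\mathbb{V}$.

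For the norm estimate I would split $[-1,1]=[-1,0]\cup(0,1]$: for $t\in(0,1]$ one has $|\mathcal{G}(u,\psi)(t)|=|u(t)|\le\|u\|_{\mathbb{U}}$, while for $t\in[-1,0]$ one has $|\mathcal{G}(u,\psi)(t)|=|\psi(t)|\le\|\psi\|_{\mathbb{A}}$. Taking the supremum over $t\in[-1,1]$ gives
\[
\|\mathcal{G}(u,\psi)\|_{\mathbb{V}}\le\max\{\|u\|_{\mathbb{U}},\|\psi\|_{\mathbb{A}}\}\le\|(u,\psi)\|_{\mathbb{U}\times\mathbb{A}},
\]
the last inequality being immediate for the usual product norm on $\mathbb{U}\times\mathbb{A}$. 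This shows that $\mathcal{G}$ is bounded, with operator norm at most $1$ (and in fact equal to $1$, as seen by taking $u$ and $\psi$ constant).

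There is essentially no hard step here: the argument reduces to a one-line supremum estimate. The only points deserving a moment's care are the well-definedness check — i.e.\ that gluing two bounded measurable pieces yields a genuine element of $B^{\infty}([-1,1],\R^{d})$ rather than merely an $L^{1}$ function, which is exactly where the choice \ref{T2} of working with $B^{\infty}$ rather than $L^{1}$ spaces is used — and the bookkeeping of the norm convention on the product space $\mathbb{U}\times\mathbb{A}$; with any of the standard choices (maximum or sum of the component norms) the conclusion that $\|\mathcal{G}\|$ is finite is unaffected.
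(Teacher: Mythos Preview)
Your argument is correct and matches the paper's own treatment: the paper simply notes that the proof is immediate (as in \cite[Proposition 2.2]{ab20mas}) and records that the operator norm equals $1$, which is precisely what your supremum estimate establishes.
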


\bigskip
\noindent The third theoretical assumption (A$x^*1$, \cite[page 536]{mas15NM}) concerns the local Lipschitz continuity of the Fr\'echet derivative of the fixed point operator $\Phi$ in \eqref{Phi} at the relevant fixed points. In the sequel $(u^{\ast},\alpha^{\ast},\omega^{\ast})\in\mathbb{U}_{}\times\mathbb{A}_{}\times(0,+\infty)$ is a fixed point of $\Phi_{}$ and $x^{\ast}$ is the corresponding $1$-periodic solution of \eqref{eq:RE}. With respect to the validity of Assumption A$x^*1$ the following holds.

\begin{proposition}\label{p_Ax*1}
Under (T1), (T2), (T3) and (T5), there exist $r_{}\in(0,\omega^{\ast})$ and $\kappa_{}\geq0$ such that
\begin{equation*}
\setlength\arraycolsep{0.1em}\begin{array}{rcl}
\|D\Phi_{}(u,\alpha,\omega)&-&D\Phi_{}(u^{\ast},\alpha^{\ast},\omega^{\ast})\|_{\mathbb{U}_{}\times\mathbb{A}_{}\times\mathbb{R}\leftarrow\mathbb{U}_{}\times\mathbb{A}_{}\times(0,+\infty)}\\[2mm]
&\leq&\kappa_{}\|(u,\alpha,\omega)-(u^{\ast},\alpha^{\ast},\omega^{\ast})\|_{\mathbb{U}_{}\times\mathbb{A}_{}\times\mathbb{R}}
\end{array}
\end{equation*}
for all $(u,\alpha,\omega)\in b((u^{\ast},\alpha^{\ast},\omega^{\ast}),r_{})$.
\end{proposition} 
\begin{proof} 
Just as its RFDE counterpart in \cite{ab20mas}, the proposition can be proved thanks to the fact that  $u^*$ lies in fact in a more regular subspace of its space $\mathbb{U}$, which is a consequence of the assumption \eqref{hpRHS}.\end{proof}

The fourth (and last) theoretical assumption (A$x^{\ast}2$, \cite[page 536]{mas15NM}), concerns the well-posedness of a linearized inhomogeneous version of the PAF \eqref{PAF}. Its validity can be proved under (T1), (T2), (T3) and (T4), together with an additional requirement, which in turn follows from assuming, e.g., the {\it hyperbolicity} of the periodic solution\footnote{Let us remark that the condition of hyperbolicity is necessary for the local stability analysis of periodic solutions in view of the Principle of Linearized Stability \cite{bl20}.} of the original problem.
It is convenient to introduce the abbreviations
\begin{equation}\label{L*M*}
\mathfrak{L}_{}^{\ast}:=\mathfrak{L}_{}(\cdot;v^{\ast},\omega^{\ast}),\qquad\mathfrak{M}_{}^{\ast}:=\mathfrak{M}_{}(\cdot;v^{\ast},\omega^{\ast}).
\end{equation}
\begin{proposition}\label{p_Ax*2}
Under (T1), (T2), (T3) and (T4), let $T^*(t,s):X\to X$ be the evolution operator for the linear homogeneous RE
$$
x(t)=\mathfrak{L}^{\ast}_{}(t)[x_t\circ s_{\omega^{\ast}}].
$$
If $1\in\sigma(T_{}^{\ast}(1,0))$ is simple, then the linear bounded operator $I_{\mathbb{U}_{}\times\mathbb{A}_{}\times\mathbb{B}_{}}-D\Phi_{}(u^{\ast},\alpha^{\ast},\omega^{\ast})$ is invertible, i.e., for all ${\color{black}(\tilde u,\tilde\alpha,\tilde\omega)}\in\mathbb{U}_{}\times\mathbb{A}_{}\times\mathbb{B}_{}$ there exists a unique $(u,\alpha,\omega)\in\mathbb{U}_{}\times\mathbb{A}_{}\times\mathbb{B}_{}$ such that
\begin{equation}\label{Ax*20}
\left\{\setlength\arraycolsep{0.1em}\begin{array}{l}
u=\mathfrak{L}_{}^{\ast}[\mathcal{G}_{}(u,\alpha)_{(\cdot)}\circ s_{\omega^{\ast}}]+\omega\mathfrak{M}_{}^{\ast}+{\color{black}\tilde u}\\[2mm]
\alpha=u_{1}+{\color{black}\tilde\alpha}\\[2mm]
p(u)={\color{black}\tilde\omega}.
\end{array}
\right.
\end{equation}
\end{proposition}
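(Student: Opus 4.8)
Here is how I would approach Proposition~\ref{p_Ax*2}.

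The plan is to write the linearised operator out explicitly, express the solution of the first line of \eqref{Ax*20} through the evolution family $T^*$, substitute into the remaining two equations, and then solve the resulting finite-codimensional problem by means of the spectral projection of $T^*(1,0)$ at the simple eigenvalue $1$.

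First I would make \eqref{Ax*20} explicit as the equation $(I-D\Phi(u^*,\psi^*,\omega^*))(u,\psi,\omega)=(u_0,\psi_0,\omega_0)$. Since $\mathcal{B}$ in \eqref{B2} is linear and independent of $\omega$ it coincides with its derivative, and differentiating the first component of $\Phi$ via Proposition~\ref{p_Afb}, together with $\mathcal{G}(u,\psi)_0=\psi$, $\mathcal{G}(u,\psi)_1=u_1$ read off from \eqref{G2} and the abbreviations \eqref{L*M*}, reproduces exactly the three lines of \eqref{Ax*20}. For $t\in[0,1]$ the first line is a linear inhomogeneous RE for $u$ whose history on $[-1,0]$ is $\psi$ (entering through $\mathcal{G}$) and whose forcing is $g:=\omega\mathfrak{M}^*+u_0$; forward well-posedness on $B^{\infty}([-1,0],\R^d)$ --- obtained by a standard Volterra/contraction argument on short subintervals, exploiting that the endpoint $\theta=0$ carries zero weight in the integral operator \eqref{L2} --- yields bounded linear operators with $u|_{[0,1]}=\mathcal{U}\psi+\mathcal{V}g$, hence $u_1=T^*(1,0)\psi+\mathcal{V}_1g$, where $T^*(1,0)\psi$ is the $\psi$-part by the very definition of the evolution operator of the statement (well defined under \ref{T1}--\ref{T4}, and $1$-periodic since $x^*$, hence $\mathfrak{L}^*$, is $1$-periodic). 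Substituting into the second and third lines of \eqref{Ax*20} leaves, for the unknowns $(\psi,\omega)\in X\times\R$,
\begin{equation*}
(I-T^*(1,0))\psi=\psi_0+\mathcal{V}_1(u_0)+\omega\,\mathcal{V}_1(\mathfrak{M}^*),\qquad p\bigl(\mathcal{U}\psi+\mathcal{V}(\omega\mathfrak{M}^*+u_0)\bigr)=\omega_0,
\end{equation*}
the first of which is in $X$ and the second scalar.

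Since by hypothesis $1\in\sigma(T^*(1,0))$ is simple, the associated Riesz spectral projection splits $X=N\oplus R$ with $N=\ker(I-T^*(1,0))=\mathrm{span}(\phi^*)$ one-dimensional ($\phi^*$ the state at $0$ of the Floquet solution $\dot x^*$), $R=\mathrm{Ran}(I-T^*(1,0))$ closed of codimension one, and $(I-T^*(1,0))|_R$ invertible. Projecting the first equation onto $R$ determines $\pi_R\psi$ uniquely as an affine function of $\omega$; projecting it onto $N$ leaves a single scalar equation, $\pi_N[\psi_0+\mathcal{V}_1(u_0)+\omega\,\mathcal{V}_1(\mathfrak{M}^*)]=0$, which fixes $\omega$ uniquely provided the coefficient $\pi_N\mathcal{V}_1(\mathfrak{M}^*)$ of $\omega$ does not vanish; the only residual freedom, the scalar $c$ in $\psi=c\phi^*+\pi_R\psi$, is then removed by the second (phase) equation provided $p$ does not vanish on $\mathcal{U}\phi^*$. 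This produces a unique $(u,\psi,\omega)$, so $I-D\Phi(u^*,\psi^*,\omega^*)$ is bijective, and boundedness of its inverse follows from the bounded inverse theorem.

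The hard part is the two scalar non-degeneracies invoked at the last step. That $\pi_N\mathcal{V}_1(\mathfrak{M}^*)\neq0$ --- i.e.\ that adjoining the extra unknown $\omega$ fills the one-dimensional cokernel of $I-T^*(1,0)$ rather than enlarging its kernel --- is precisely the \emph{additional requirement} alluded to before the statement, and follows from the simplicity of the Floquet multiplier $1$ (equivalently from hyperbolicity), since otherwise one would obtain a genuine two-parameter family of periodic solutions near $x^*$, contradicting its isolation up to time translation. That $p(\mathcal{U}\phi^*)\neq0$ is the transversality of the phase condition to the orbit, which holds for any $p$ chosen as in Subsection~\ref{s_introre} (for an integral phase condition with reference $x^*$ it reduces to $\int_0^1|\dot x^*_k|^2>0$). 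A further point requiring care, absent in the RFDE analysis of \cite{ab20mas}, is the construction and uniform boundedness of the solution operator of a linear RE on the $B^{\infty}$ spaces, renewal equations lacking the instantaneous smoothing of retarded ones.
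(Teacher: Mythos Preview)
Your approach mirrors the paper's almost exactly: convert \eqref{Ax*20} to an initial value problem for $v=\mathcal{G}(u,\psi)$, express $u_1$ as $T^*(1,0)\psi$ plus a forcing contribution, substitute into the second line to obtain an equation of the form $(I-T^*(1,0))\psi=\omega\xi_1^*+\xi_2^*$ in $X$, split $X$ along the simple eigenvalue $1$ into range $R$ and a one-dimensional kernel, determine $\omega$ from the kernel component (the paper's $\omega=-k_2/k_1$ in \eqref{omega}), and finally fix the remaining scalar multiple of the eigenfunction via the phase condition. Your use of a genuine solution operator $\mathcal{V}_1$ for the inhomogeneous part is, if anything, cleaner than the paper's version, which takes $\xi_1^*$ to be the bare zero-extension $\mathfrak{M}_1^{*(0)}$.

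The substantive gap is your treatment of the nondegeneracy $\pi_N\mathcal{V}_1(\mathfrak{M}^*)\neq0$ (the paper's $k_1\neq0$, i.e.\ $\xi_1^*\notin R$). You correctly flag it as ``the hard part'' but offer only the standard heuristic (otherwise a two-parameter family of periodic solutions, contradicting isolation). The paper devotes all of Subsection~\ref{ss_k1n0} to a rigorous proof: it passes to the adjoint Volterra integral equation, constructs an explicit nondegenerate bilinear pairing $[\cdot,\cdot]_t$ on $L^\infty\times L^1$, proves via resolvent representations that the forward monodromy $T(t+1,t)$ and its backward counterpart $V(t-1,t)$ are adjoint with respect to this pairing, and then computes directly that $\int_0^1[y^t,x_t]_t\,\mathrm{d}t=\omega^*\int_0^1 y(t)\mathfrak{M}^*(t)\,\mathrm{d}t$, which is nonzero by simplicity of the eigenvalue $1$ and contradicts $\mathfrak{M}_1^*\in R$. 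This adjoint-theory computation is the genuinely RE-specific content of the proposition---the piece with no direct analogue in the RFDE argument of \cite{ab20mas}---and is exactly what your sketch leaves out. The phase-condition nondegeneracy $p(\mathcal{U}\phi^*)\neq0$ is, as you surmise, simply assumed (cf.\ \cite[Remark~2.8]{ab20mas}).
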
 
\begin{proof}
\eqref{Ax*20} can be treated as an initial value problem for $v=\mathcal{G}(u,\alpha)$, i.e.,
\begin{equation}\label{Ax*211}
\left\{\setlength\arraycolsep{0.1em}\begin{array}{l}
v(t)=\mathfrak{L}^*(t)[v_{t}\circ s_{\omega^*}]+\omega\mathfrak{M}^*(t)+{\color{black}\tilde u}(t)\\[2mm]
v_{0}=\alpha
\end{array}
\right.
\end{equation}
for $t\in[0,1]$, imposing then the boundary conditions in \eqref{Ax*20}. We can write $v(t)=v^{(1)}(t)+v^{(2)}(t)$, where $v^{(1)}(t)$ is the solution of
\begin{equation*}
\left\{\setlength\arraycolsep{0.1em}\begin{array}{l}
v^{(1)}(t)=\mathfrak{L}^{\ast}(t)[v_{t}\circ s_{\omega^{\ast}}]\\[2mm]
v^{(1)}_{0}=\alpha,
\end{array}
\right.
\end{equation*}
which means that $v^{(1)}_t=T^*(t,0)\alpha$, while $v^{(2)}(t)$ is the solution of
\begin{equation*}
\left\{\setlength\arraycolsep{0.1em}\begin{array}{l}
v^{(2)}(t)=\omega\mathfrak{M}^{\ast}(t)+u_{0}(t)\\[2mm]
v^{(2)}_{0}=0,
\end{array}
\right.
\end{equation*}
i.e., $v^{(2)}_t=\omega\mathfrak{M}_t^{*(0)}+{\color{black}\tilde u_{t}}^{(0)}$ where, in turn,
\begin{equation*}
\mathfrak{M}_t^{*(0)}(\theta):=\begin{cases}
0,&t+\theta\in[-1,0],\\
\mathfrak{M}^{\ast}(t+\theta),&t+\theta\in(0,1]
\end{cases}
\end{equation*}
and
\begin{equation*}
{\color{black}\tilde u_{t}}^{(0)}(\theta):=\begin{cases}
0,&t+\theta\in[-1,0],\\
{\color{black}\tilde u}(t+\theta),&t+\theta\in(0,1].
\end{cases}
\end{equation*}
The first boundary condition in \eqref{Ax*20} gives then
\begin{equation}\label{Ax*21}
\alpha=T^{\ast}(1,0)\alpha+\omega\mathfrak{M}_1^{*(0)}+{\color{black}\tilde u_{1}}^{(0)}+\alpha_0.
\end{equation}
Now, the proof can be concluded as that of \cite[Proposition 2.7]{ab20mas}, by defining the elements $\xi_{1}^{\ast}:=\mathfrak{M}_1^{*(0)}$ and $\xi_{2}^{\ast}:={\color{black}\tilde u_{1}}^{(0)}+\alpha_0$, and assuming $p(v(\cdot;\alpha)\vert_{[0,1]})\neq 0$ (see \cite[Remark 2.8]{ab20mas}) and $\xi_1^*\not\in R$, where $R$ is the range of the operator $I_X-T^*(1,0)$ (see Appendix \ref{s_appendix} for a detailed proof of the latter point).
\end{proof}

\subsection{Numerical assumptions}
\label{sub_numerical}
As anticipated, the present subsection deals with the numerical assumptions, which concern the chosen discretization scheme for the numerical method. Such scheme is defined by the {\it primary} and the {\it secondary} discretizations.

\bigskip
As in \cite{ab20mas}, the primary discretization consists in reducing the spaces $\mathbb{U}$ and $\mathbb{A}$ to finite-dimensional spaces $\mathbb{U}_{L}$ and $\mathbb{A}_{L}$, given a level of discretization $L$. This happens by means of {\it restriction} operators $\rho_{L}^{+}:\mathbb{U}\rightarrow\mathbb{U}_{L}$, $\rho_{L}^{-}:\mathbb{A}\rightarrow\mathbb{A}_{L}$ and {\it prolongation} operators $\pi_{L}^{+}:\mathbb{U}_{L}\rightarrow\mathbb{U}$, $\pi_{L}^{-}:\mathbb{A}_{L}\rightarrow\mathbb{A}$, which extend respectively to $R_{L}:\mathbb{U}\times\mathbb{A}\times\mathbb{B}\rightarrow\mathbb{U}_{L}\times\mathbb{A}_{L}\times\mathbb{B}$ given by $R_{L}(u,\alpha,\omega):=(\rho_{L}^{+}u,\rho_{L}^{-}\alpha,\omega)$ and $P_{L}:\mathbb{U}_{L}\times\mathbb{A}_{L}\times\mathbb{B}\rightarrow\mathbb{U}\times\mathbb{A}\times\mathbb{B}$ given by $P_{L}(u_{L},\alpha_{L},\omega):=(\pi_{L}^{+}u_{L},\pi_{L}^{-}\alpha_{L},\omega)$.
All of them are linear and bounded. In the following we describe the specific choices we make in this context, based on piecewise polynomial interpolation.

Starting from $\mathbb{U}$, which concerns the interval $[0,1]$, we choose the uniform {\it outer} mesh
\begin{equation}\label{outmesh+}
\Omega_{L}^{+}:=\{t_{i}^{+}=ih\ :\ i\in\{0,\ldots,L\},\,h=1/L\}\subset[0,1],
\end{equation}
and {\it inner} meshes
\begin{equation}\label{inmesh+}
\Omega_{L,i}^{+}:=\{t_{i,j}^{+}:=t_{i-1}^{+}+c_{j}h\ :\ j\in\{1,\ldots,m\}\}\subset[t_{i-1}^{+},t_{i}^{+}],\quad i\in\{1,\ldots,L\},
\end{equation}
where $0<c_{1}<\cdots<c_{m}<1$ are given abscissae for $m$ a positive integer. Correspondingly, we define
\begin{equation}\label{UL}
\mathbb{U}_{L}:=\mathbb{R}^{(1+Lm)\times d},
\end{equation}
whose elements $u_{L}$ are indexed as
\begin{equation}\label{uL}
u_{L}:=(u_{1,0},u_{1,1},\ldots,u_{1,m},\ldots,u_{L,1}\ldots,u_{L,m})^{T}
\end{equation}
with components in $\mathbb{R}^{d}$. Finally, we define, for $u\in\mathbb{U}$,
\begin{equation}\label{rL+}
\rho_{L}^{+}u:=(u(0),u(t_{1,1}^{+}),\ldots,u(t_{1,m}^{+}),\ldots,u(t_{L,1}^{+})\ldots,u(t_{L,m}^{+}))^{T}\in\mathbb{U}_{L}
\end{equation}
and, for $u_{L}\in\mathbb{U}_{L}$, $\pi_{L}^{+}u_{L}\in\mathbb{U}$ as the unique element of the space
\begin{equation}\label{PiLm+}
\Pi_{L,m}^{+}:=\{p\in C([0,1],\mathbb{R}^d)\ :\ p\vert_{[t_{i-1}^{+},t_{i}^{+}]}\in\Pi_m,\;i\in\{1,\ldots,L\}\}
\end{equation}
such that
\begin{equation}\label{pL+}
\pi_{L}^{+}u_{L}(0)=u_{1,0},\quad\pi_{L}^{+}u_{L}(t_{i,j}^{+})=u_{i,j},\quad j\in\{1,\ldots,m\},\;i\in\{1,\ldots,L\}.
\end{equation}
Above $\Pi_{m}$ is the space of $\mathbb{R}^d$-valued polynomials having degree $m$ and, when needed, we represent $p\in\Pi_{L,m}^{+}$ through its pieces as
\begin{equation}\label{lagrange}
p\vert_{[t_{i-1}^{+},t_{i}^{+}]}(t)=\sum_{j=0}^{m}\ell_{m,i,j}(t)p(t_{i,j}^{+}),\quad t\in[0,1],
\end{equation}
where, for ease of notation, we implicitly set
\begin{equation}\label{t0+}
t_{i,0}^{+}:=t_{i-1}^{+},\quad i\in\{1,\ldots,L\},
\end{equation}
and $\{\ell_{m,i,0},\ell_{m,i,1},\ldots,\ell_{m,i,m}\}$ is the Lagrange basis relevant to the nodes $\{t_{i,0}^{+}\}\cup\Omega_{L,i}^{+}$. Observe that the latter is invariant with respect to $i$ as long as we fix the abscissae $c_{j}$, $j\in\{1,\ldots,m\}$, defining the inner meshes \eqref{inmesh+}. Indeed, for every $i\in\{1,\ldots,L\}$,
\begin{equation*}
\ell_{m,i,j}(t)=\ell_{m,j}\left(\frac{t-t_{i-1}^{+}}{h}\right),\quad t\in[t_{i-1}^{+},t_{i}^{+}],
\end{equation*}
where $\{\ell_{m,0},\ell_{m,1},\ldots,\ell_{m,m}\}$ is the Lagrange basis in $[0,1]$ relevant to the abscissae $c_{0},c_{1},\ldots,c_{m}$ with $c_{0}:=0$. 
%Moreover, it is useful to define also the associated Lebesgue constants as
%\begin{equation*}
%\Lambda_{m,i}:=\max_{t\in[t_{i-1}^{+},t_{i}^{+}]}\sum_{j=0}^{m}|\ell_{m,i,j}(t)|,\quad i\in\{1,\ldots,L\},
%\end{equation*}
%which turn out to be independent of $i$ as well:
%\begin{equation}\label{lebesgue}
%\Lambda_{m,i}=\Lambda_{m}:=\max_{t\in[0,1]}\sum_{j=0}^{m}|\ell_{m,j}(t)|.
%\end{equation}

Similarly, for $\mathbb{A}$, which concerns the interval $[-1,0]$, we choose
\begin{equation}\label{outmesh-}
\Omega_{L}^{-}:=\{t_{i}^{-}=ih-1\ :\ i\in\{0,\ldots,L\},\;h=1/L\}\subset[-1,0],
\end{equation}
and
\begin{equation}\label{inmesh-}
\Omega_{L,i}^{-}:=\{t_{i,j}^{-}:=t_{i-1}^{-}+c_{j}h\ :\ j\in\{1,\ldots,m\}\}\subset[t_{i-1}^{-},t_{i}^{-}],\quad i\in\{1,\ldots,L\}.
\end{equation}
Correspondingly, we define
\begin{equation}\label{AL}
\mathbb{A}_{L}:=\mathbb{R}^{(1+Lm)\times d}
\end{equation}
with indexing
\begin{equation}\label{psiL}
\alpha_{L}:=(\alpha_{1,0},\alpha_{1,1},\ldots,\alpha_{1,m},\ldots,\alpha_{L,1}\ldots,\alpha_{L,m})^{T};
\end{equation}
for $\alpha\in\mathbb{A}$,
\begin{equation}\label{rL-}
\rho_{L}^{-}\alpha:=(\alpha(-1),\alpha(t_{1,1}^{-}),\ldots,\alpha(t_{1,m}^{-}),\ldots,\alpha(t_{L,1}^{-})\ldots,\alpha(t_{L,m}^{-}))^{T}\in\mathbb{A}_{L}
\end{equation}
and, for $\alpha_{L}\in\mathbb{A}_{L}$, $\pi_{L}^{-}\alpha_{L}\in\mathbb{A}$ as the unique element of the space
\begin{equation}\label{PiLm-}
\Pi_{L,m}^{-}:=\{p\in C([-1,0],\mathbb{R}^d)\ :\ p\vert_{[t_{i-1}^{-},t_{i}^{-}]}\in\Pi_m,\;i\in\{1,\ldots,L\}\}
\end{equation}
such that
\begin{equation}\label{pL-}
\pi_{L}^{-}\alpha_{L}(-1)=\alpha_{1,0},\quad\pi_{L}^{-}\alpha_{L}(t_{i,j}^{-})=\alpha_{i,j},\quad j\in\{1,\ldots,m\},\;i\in\{1,\ldots,L\}.
\end{equation}
Elements in $\Pi_{L,m}^{-}$ are represented in the same way as those of $\Pi_{L,m}^{+}$ by suitably adapting both \eqref{lagrange} and \eqref{t0+}.
\begin{remark}\label{r_primary}
It is worth pointing out that more general choices can be made concerning outer and inner meshes. In particular, as already remarked in Section \ref{s_piecewise}, in practical applications {\it adaptive} outer meshes represent a standard for RFDEs, see, e.g., \cite{elir00}. As for inner meshes, abscissae including the extrema of $[0,1]$ can also be considered, paying attention to put the correct constraints at the internal outer nodes, i.e., $t_{i}^{\pm}$ for $i\in\{1,\ldots,L-1\}$.
\end{remark}
\bigskip
The secondary discretization consists in replacing $\mathcal{F}$ in the first of \eqref{Phi} with an operator $\mathcal{F}_{M}$ that can be exactly computed, for a given level of discretization $M$. In particular, we define $\mathcal{F}_{M}$ through an approximated version $F_{M}$ of the right-hand side $F$ defined in \eqref{hpRHS} as 
\begin{equation}\label{quadrule}
\mathcal{F}_{M}(u,\alpha,\omega)=F_M(\mathcal{G}(u,\alpha)_{(\cdot)}\circ s_{\omega}):=\omega \sum_{i=0}^Mw_i K(\omega\eta_i,\mathcal{G}(u,\alpha)_{\eta_i}), 
\end{equation}
where $-\tau/\omega\leq\eta_0<\cdots<\eta_M\leq 0$. Indeed, in realistic applications the integrand function in \eqref{hpRHS} can be exactly computed, as already remarked at the end of Section \ref{s_piecewise}. Correspondingly, $\Phi_{M}$ is the operator obtained by replacing $\mathcal{F}$ in $\Phi$ in \eqref{Phi} with its approximated version, i.e., $\Phi_{M}:\mathbb{U}\times\mathbb{A}\times\mathbb{B}\rightarrow\mathbb{U}\times\mathbb{A}\times\mathbb{B}$ defined by
\begin{equation}\label{PhiM}
\Phi_{M}(u,\alpha,\omega):=
\begin{pmatrix}
F_{M}(\mathcal{G}(u,\alpha)_{(\cdot)}\circ s_{\omega})\\[2mm]
u_{1}\\[2mm]
\omega-p(u)
\end{pmatrix}.
\end{equation}
A secondary discretization for $\mathcal{G}$ in \eqref{Phi} is instead unnecessary, since it can be evaluated exactly in $\pi_{L}^{+}\mathbb{U}_{L}\times\pi_{L}^{-}\mathbb{A}_{L}$ according to \eqref{UL} and \eqref{AL}. As for the operator $p$ defining the phase condition in \eqref{Phi}, we assume that it can be evaluated exactly in $\pi_{L}^{+}\mathbb{U}_{L}$. \footnote{This is indeed true in the case of integral phase conditions if the piecewise quadrature is based on the mesh of the primary discretization, which is the standard approach in practical applications.}

\bigskip
From the two discretizations together we can define the discrete version 
\begin{equation*}
\Phi_{L,M}:=R_{L}\Phi_{M} P_{L}:\mathbb{U}_{L}\times\mathbb{A}_{L}\times\mathbb{B}\rightarrow\mathbb{U}_{L}\times\mathbb{A}_{L}\times\mathbb{B}
\end{equation*}
of the fixed point operator $\Phi$ in \eqref{Phi} as
\begin{equation*}
\Phi_{L,M}(u_{L},\alpha_{L},\omega):=
\begin{pmatrix}
\rho_{L}^{+}F_{M}(\mathcal{G}(\pi_{L}^{+}u_{L},\pi_{L}^{-}\alpha_{L})_{(\cdot)}\circ s_{\omega})\\[2mm]
\rho_{L}^{-}(\pi_{L}^{+}u_{L})_1\\[2mm]
\omega-p(\pi_{L}^{+}u_{L})
\end{pmatrix}.
\end{equation*}
A fixed point $(u_{L,M}^{\ast},\alpha_{L,M}^{\ast},\omega_{L,M}^{\ast})$ of $\Phi_{L,M}$ can be found by standard solvers for nonlinear systems of algebraic equations and, as will be shown in Subsection \ref{s_convresult}, its prolongation $P_{L}(u_{L,M}^{\ast},\alpha_{L,M}^{\ast},\omega_{L,M}^{\ast})$ is then considered as an approximation of a fixed point $(u^{\ast},\alpha^{\ast},\omega^{\ast})$ of $\Phi$ in \eqref{Phi}. Correspondingly, $v_{L,M}^{\ast}:=\mathcal{G}(\pi_{L}^{+}u_{L,M}^{\ast},\pi_{L}^{-}\alpha_{L,M}^{\ast})$ is considered as an approximation of the solution $v^{\ast}=\mathcal{G}(u^{\ast},\alpha^{\ast})$ of \eqref{Phi}.

\bigskip
The hypotheses on the discretization method needed to prove the validity of the numerical assumptions in \cite{mas15NM} are:
\begin{itemize}
\item[(N1)] the primary discretization of the space $\mathbb{U}$ is based on the choices \eqref{outmesh+}--\eqref{pL+};
\item[(N2)] the primary discretization of the space $\mathbb{A}$ is based on the choices \eqref{outmesh-}--\eqref{pL-};
\item[(N3)] the nodes $\eta_0,\ldots,\eta_M$, together with the weights $w_0,\ldots,w_M$ chosen for the secondary discretization as in \eqref{quadrule} define an interpolatory quadrature formula which is convergent in $B^{\infty}([0,1],\mathbb{R}^d)$.
%\item\label{N4} $K\in \mathcal{C}^1(\mathbb{R}\times\mathbb{R}^d,\mathbb{R}^d)$.
\end{itemize}

\bigskip
The first numerical assumption to be verified in \cite{mas15NM} is Assumption A$\mathfrak{F}_K\mathfrak{B}_K$ (page 535). As already observed, $\mathcal{B}$ and $p$ are linear functions, thus the proof of its validity is a direct consequence of the following.
\begin{proposition}\label{p_AfMb}
Under (T1), (T2) and (T3), $\mathcal{F}_{M}$ is Fr\'echet-differentiable, from the right with respect to $\omega$, at every $(\hat{v},\hat u,\hat\omega)\in\mathbb{V}\times\mathbb{U}\times(0,+\infty)$ and
\begin{equation*}
D\mathcal{F}_{M}(\hat{v},\hat u,\hat\omega)(v,u,\omega)=\mathfrak{L}_{M}(\cdot;\hat{v},\hat\omega)[v_{(\cdot)}\circ s_{\hat\omega}]+\omega \mathfrak{M}_{M}(\cdot;\hat{v},\hat\omega)
\end{equation*}
for $(v,u,\omega)\in\mathbb{V}\times\mathbb{U}\times(0,+\infty)$, where, for $t\in [0,1]$,
\begin{equation*}
\mathfrak{L}_{M}(t;\hat v,\hat\omega)[v_t\circ s_{\hat\omega}]:=\hat\omega\sum_{i=0}^Mw_iD_2K(\hat\omega\eta_i, \hat v(t+\eta_i))v(t+\eta_i)
\end{equation*}
and
\begin{equation*}
\setlength\arraycolsep{0.1em}\begin{array}{rl}
\mathfrak{M}_{M}(t;v,\omega):=&\displaystyle\sum_{i=0}^Mw_i K(\omega\eta_i, v(t+\eta_i))-\displaystyle\frac{\tau}{\omega} K\left(-\tau,v\left(t-\frac{\tau}{\omega}\right)\right)\\[2mm]
+&\displaystyle\omega\sum_{i=0}^Mw_iD_1K(\omega\eta_i,v(t+\eta_i))\eta_i.
\end{array}
\end{equation*}
\end{proposition}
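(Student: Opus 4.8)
The plan is to follow the proof of Proposition~\ref{p_Afb}, exploiting the observation that $\mathcal{F}_M$ in \eqref{quadrule} is obtained from $\mathcal{F}$ in \eqref{F2} by the single replacement of the integral $\omega\int_{-\tau/\omega}^{0}(\cdots)\dd\theta$ with the finite quadrature sum $\omega\sum_{i=0}^{M}w_{i}(\cdots)$; since $\mathcal{B}$ and $p$ are linear, only the statement about $\mathcal{F}_M$ needs proof. Fix $(\hat v,\hat u,\hat\omega)\in\mathbb{V}\times\mathbb{U}\times(0,+\infty)$. First I would record the preliminary fact that, for each $i\in\{0,\ldots,M\}$ and each $t\in[0,1]$, the point-evaluation $\mathbb{V}=B^{\infty}([-1,1],\R^{d})\ni v\mapsto v(t+\alpha_{i})\in\R^{d}$ is well defined (as $t+\alpha_{i}\in[-\tau/\omega,1]\subseteq[-1,1]$), linear and bounded by $1$, uniformly in $t$ — which is precisely why $B^{\infty}$ rather than $L^{1}$ is used here, pointwise evaluation being legitimate.

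I would then differentiate each summand $\omega\,w_{i}K(\omega\alpha_{i},v(t+\alpha_{i}))$ with respect to $(v,u,\omega)$ by the chain and product rules, using \ref{T3} (continuity of $K$ and of its partials $D_{1}K,D_{2}K$) and keeping track of the three ways $\omega$ enters: the explicit prefactor $\omega$; the first argument $\omega\alpha_{i}$ of $K$; and the node $\alpha_{0}=-\tau/\omega$, which is the discrete counterpart of the $\omega$-dependent lower integration limit in \eqref{M2} and is handled by the same Leibniz-type reasoning used there. Equivalently — and this is the cleanest way to organize the bookkeeping — one checks that $D\mathcal{F}_{M}(\hat v,\hat u,\hat\omega)$ is obtained from $D\mathcal{F}(\hat v,\hat u,\hat\omega)$ in \eqref{DF2}, \eqref{L2} and \eqref{M2} simply by applying the quadrature rule to each integral term of $\mathfrak{L}$ and $\mathfrak{M}$, while the non-integral boundary term $-\frac{\tau}{\omega}K(-\tau,v(t-\tau/\omega))$ of $\mathfrak{M}$ carries over unchanged. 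The result is exactly $\mathfrak{L}_{M}$ and $\mathfrak{M}_{M}$ as in the statement, giving $D\mathcal{F}_{M}(\hat v,\hat u,\hat\omega)(v,u,\omega)=\mathfrak{L}_{M}(\cdot;\hat v,\hat\omega)[v_{(\cdot)}\circ s_{\hat\omega}]+\omega\mathfrak{M}_{M}(\cdot;\hat v,\hat\omega)$ with no dependence on $u$. I would finish by checking that this candidate is a bounded linear operator $\mathbb{V}\times\mathbb{U}\times\R\to\mathbb{U}=B^{\infty}([0,1],\R^{d})$ — immediate since the sum is finite and $K,D_{1}K,D_{2}K$ are bounded on bounded sets by continuity — and that the Taylor remainder is $o(\|(v,u,\omega)\|)$ uniformly in $t\in[0,1]$, via uniform continuity of $K,D_{1}K,D_{2}K$ on the relevant compacta.

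The step I expect to be the main obstacle is making this rigorous, i.e.\ proving that the formal term-by-term computation above — equivalently, the interchange of differentiation and quadrature — really yields the (right-sided) Fr\'echet derivative of $\mathcal{F}_{M}$, with the moving node $\alpha_{0}=-\tau/\omega$ contributing precisely the boundary term of $\mathfrak{M}_{M}$ and nothing further. This is the point at which the argument genuinely departs from the RFDE case of \cite{ab20mas}, and it is what should be spelled out in the appendix. By contrast the remainder estimates are strictly easier than in Proposition~\ref{p_Afb}: a finite sum replaces the integral, so no dominated-convergence or differentiation-under-the-integral justification is needed, and once the interchange is in place the proof is a transcription of that of Proposition~\ref{p_Afb} with $\int_{-\tau/\omega}^{0}(\cdots)\dd\theta$ replaced by $\sum_{i=0}^{M}w_{i}(\cdots)$.
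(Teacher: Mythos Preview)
Your proposal is correct and takes essentially the same approach as the paper, whose own proof consists of the single sentence ``The proof goes as the one of Proposition~\ref{p_Afb}, after replacing $F$ with $F_{M}$.'' Your elaboration --- differentiating the finite quadrature sum term by term and recognizing the moving endpoint $\alpha_{0}=-\tau/\omega$ as the discrete analogue of the Leibniz boundary contribution in $\mathfrak{M}_{M}$ --- is precisely what that replacement amounts to.
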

\begin{proof}The proposition can be proved as Proposition \ref{p_Afb}, by replacing $\mathcal{F}$ in the first of \eqref{Phi} with $\mathcal{F}_M$ in \eqref{quadrule}.\end{proof}

\bigskip
For the remaining numerical assumptions, it is useful to define $\Psi_{L,M}:\mathbb{U}\times\mathbb{A}\times\mathbb{B}\rightarrow\mathbb{U}\times\mathbb{A}\times\mathbb{B}$ as
\begin{equation}\label{PsiLM}
\Psi_{L,M}:=I_{\mathbb{U}\times\mathbb{A}\times\mathbb{B}}-P_{L}R_{L}\Phi_{M}.
\end{equation}

\bigskip
The second numerical assumption in \cite{mas15NM} is CS1 (page 536), which is somehow the discrete version of A$x^{\ast}1$ therein, here Proposition \ref{p_Ax*1}. With respect to its validity, the following holds.
\begin{proposition}\label{p_CS1}
Under (T1), (T2), (T3), (T5), (N1) and (N2), there exist $r_{1}\in(0,\omega^{\ast})$ and $\kappa\geq0$ such that
\begin{equation*}
\setlength\arraycolsep{0.1em}\begin{array}{rcl}
\|D\Psi_{L,M}(u,\alpha,\omega)&-&D\Psi_{L,M}(u^{\ast},\alpha^{\ast},\omega^{\ast})\|_{\mathbb{U}\times\mathbb{A}\times\mathbb{B}\leftarrow\mathbb{U}\times\mathbb{A}\times(0,+\infty)}\\[2mm]
&\leq&\kappa\|(u,\alpha,\omega)-(u^{\ast},\alpha^{\ast},\omega^{\ast})\|_{\mathbb{U}\times\mathbb{A}\times\mathbb{B}}
\end{array}
\end{equation*}
for all $(u,\alpha,\omega)\in b((u^{\ast},\alpha^{\ast},\omega^{\ast}),r_{1})$ and for all positive integers $L$ and $M$.
\end{proposition}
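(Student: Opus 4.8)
The plan is to strip off the discretization operators, reduce to the derivative of the nonlinear piece $\mathcal{F}_{M}$, and then rerun the argument of Proposition~\ref{p_Ax*1} with the integral over $[-\tau/\omega,0]$ replaced by the quadrature sum, keeping track of uniformity in $M$. Since $\Psi_{L,M}=I-P_{L}R_{L}\Phi_{M}$, the difference of the differentials equals $-P_{L}R_{L}\bigl(D\Phi_{M}(u,\psi,\omega)-D\Phi_{M}(u^{\ast},\psi^{\ast},\omega^{\ast})\bigr)$. The maps $\rho_{L}^{\pm}$ are pointwise evaluations, hence of norm $1$ in the $B^{\infty}$ norms, while $\pi_{L}^{\pm}$ are piecewise Lagrange interpolations whose norms equal the Lebesgue constant $\max_{s\in[0,1]}\sum_{j=0}^{m}|\ell_{m,j}(s)|$ of the reference basis in $[0,1]$, which depends only on $m$ and on $c_{1},\dots,c_{m}$ and not on $L$; hence $\|P_{L}\|,\|R_{L}\|\le C$ with $C$ independent of $L$, by \ref{N1} and \ref{N2}, and it suffices to bound $\|D\Phi_{M}(u,\psi,\omega)-D\Phi_{M}(u^{\ast},\psi^{\ast},\omega^{\ast})\|$ uniformly in $L$ and $M$. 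Only the first component of $\Phi_{M}$ in \eqref{PhiM} is nonlinear: the other two are affine in $(u,\psi,\omega)$, so their differentials are constant and cancel in the difference; and the first component is $\mathcal{F}_{M}(\mathcal{G}(u,\psi),\cdot,\omega)$ with the middle slot inactive, so by linearity and boundedness of $\mathcal{G}$ (Proposition~\ref{l_G}) and the chain rule the task reduces to bounding $\|D\mathcal{F}_{M}(v,\omega)-D\mathcal{F}_{M}(v^{\ast},\omega^{\ast})\|_{\mathbb{U}\leftarrow\mathbb{V}\times\R}$ by a multiple of $\|(v,\omega)-(v^{\ast},\omega^{\ast})\|$, where $v:=\mathcal{G}(u,\psi)$ and $v^{\ast}:=\mathcal{G}(u^{\ast},\psi^{\ast})$.

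By Proposition~\ref{p_AfMb}, $D\mathcal{F}_{M}(v,\omega)(p,\omega')=\mathfrak{L}_{M}(\cdot;v,\omega)[p_{(\cdot)}\circ s_{\omega}]+\omega'\,\mathfrak{M}_{M}(\cdot;v,\omega)$, so I would bound separately the operator-norm Lipschitz modulus of $(v,\omega)\mapsto\mathfrak{L}_{M}(\cdot;v,\omega)[\,\cdot\circ s_{\omega}]$ as a map $\mathbb{V}\to\mathbb{U}$ and the $\mathbb{U}$-norm Lipschitz modulus of $(v,\omega)\mapsto\mathfrak{M}_{M}(\cdot;v,\omega)$, both at $(v^{\ast},\omega^{\ast})$, shrinking $r_{1}$ if needed so that $\omega>0$ throughout the ball. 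Structurally $\mathfrak{L}_{M}$ and $\mathfrak{M}_{M}$ are the expressions \eqref{L2}--\eqref{M2} with $\omega\!\int_{-\tau/\omega}^{0}(\cdot)\dd\theta$ replaced by $\omega\sum_{i=0}^{M}w_{i}(\cdot)$; since the secondary quadrature is interpolatory and convergent in $B^{\infty}$ (assumption \ref{N3}), uniform boundedness gives $W:=\sup_{M}\sum_{i=0}^{M}|w_{i}|<\infty$, which takes over the role of the interval length $\tau/\omega$ in the continuous estimates and is precisely what secures uniformity in $M$. With this substitution the estimates behind Proposition~\ref{p_Ax*1} carry over: one expands each difference of summands, controls the multiplicative factors through the local boundedness and the Lipschitz bounds for $K$, $D_{1}K$, $D_{2}K$ granted by \ref{T3} and \ref{T5} on the set $\{(\omega\alpha_{i},v(t+\alpha_{i})):t\in[0,1],\ 0\le i\le M,\ (v,\omega)\in\overline{B}((v^{\ast},\omega^{\ast}),r_{1})\}$ (bounded uniformly in $M$, since $\omega\alpha_{i}\in[-\tau,0]$ and $v$ stays in a bounded subset of $B^{\infty}$), and absorbs the terms in which $v^{\ast}$ is evaluated at $\omega$-shifted arguments — e.g.\ $v^{\ast}(t-\tau/\omega)$ against $v^{\ast}(t-\tau/\omega^{\ast})$ in $\mathfrak{M}_{M}$ — using that $v^{\ast}=\mathcal{G}(u^{\ast},\psi^{\ast})$ is (Lipschitz-)continuous, $x^{\ast}$ being a solution of \eqref{eq:RE} with continuous right-hand side. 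One then reads off $\kappa$ from $W$, these bounds and the Lipschitz constant in \ref{T5}.

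The step I expect to be the main obstacle is the subtle point already present in Proposition~\ref{p_Ax*1}: the operator $\mathfrak{L}_{M}(\cdot;v,\omega)[\,\cdot\circ s_{\omega}]$ evaluates its argument at $\omega$-dependent points, so a crude sup-norm estimate over $p\in\mathbb{V}$ does not by itself produce a Lipschitz bound in $\omega$. I would handle this, as in the continuous case, by regarding the secondary quadrature as acting on the fixed underlying mesh of $[-1,0]$ truncated at the lower limit $-\tau/\omega$, so that varying $\omega$ perturbs only a boundary sliver of the sum, of size $O(|\omega-\omega^{\ast}|)$ — exactly as the integration limit behaves for $\mathfrak{L}$ and $\mathfrak{M}$ — while the remaining summands carry fixed nodes and are dealt with term by term as above. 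Together with the weight bound $\sum_{i=0}^{M}|w_{i}|\le W$, this keeps every constant independent of $L$ and $M$, as required.
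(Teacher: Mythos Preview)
Your approach is essentially the paper's: peel off $P_{L}R_{L}$ using its $L$-independent norm bound $\max\{\Lambda_{m},1\}$ (the paper invokes the RE analogue of \cite[Corollary~A.3]{ab20arx} for this step) and then rerun the Lipschitz estimates of Proposition~\ref{p_Ax*1} with $F$ replaced by $F_{M}$, obtaining a constant $\kappa_{1}$ for $D\Phi_{M}$ and the final $\kappa=\kappa_{1}\max\{\Lambda_{m},1\}$.

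Two minor divergences are worth noting. First, you appeal to \ref{N3} to secure $\sup_{M}\sum_{i}|w_{i}|<\infty$, but \ref{N3} is not among the hypotheses of this proposition; the paper's short proof does not invoke it and simply transplants the $P,Q,R,S$ decomposition from Proposition~\ref{p_Ax*1} verbatim with the quadrature sum in place of the integral. Second, your last paragraph pictures the secondary quadrature as a fixed mesh on $[-1,0]$ truncated at $-\tau/\omega$, whereas in the paper the nodes $\alpha_{0}=-\tau/\omega,\dots,\alpha_{M}=0$ live in $[-\tau/\omega,0]$ and move with $\omega$ (see the end of Section~\ref{s_piecewise}); the $\omega$-variation is therefore a global rescaling of all nodes, not a moving cutoff of a fixed grid, so the discrete analogue of the sliver term $B_{1,2}$ has to be read through that rescaling rather than through a boundary strip count.
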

\begin{proof} The proof is substantially the same as that of \cite[Proposition 3.7]{ab20mas} since the same primary discretization is used; the main difference lies in the spaces involved, and therefore in the norm in which the left-hand side needs to be evaluated. In practice, we can define $\kappa:=\max\{\Lambda_m,1\}\cdot\max_{i\in\{1,2\},t\in[0,1]}\|D_iK(\omega^*\cdot,v^{\ast}_{t})\|_{\mathbb{R}^{d}\leftarrow \mathbb{R}}$, where $\Lambda_m$ is the Lebesgue constant of the chosen nodes. In particular, unlike the RFDE case, the Lebesgue constant defined by the derivatives of the Lagrange polynomials is not involved.\end{proof}

\bigskip
Correspondigly, the last numerical assumption (CS2, page 537), can be seen as the discrete version of A$x^{\ast}2$ therein, here Proposition \ref{p_Ax*2}. With respect to its validity, the following holds.
\begin{proposition}\label{p_CS2b}
Under (T1), (T2), (T3), (T4), (T5), (N1), (N2) and (N3), the operator $D\Psi_{L,M}(u^{\ast},\alpha^{\ast},\omega^{\ast})$ is invertible and its inverse is uniformly bounded with respect to both $L$ and $M$. Moreover,
\begin{equation*}
\setlength\arraycolsep{0.1em}\begin{array}{rcl}
\lim_{L,M\rightarrow\infty}&&\displaystyle\frac{1}{r_{2}(L,M)}\|[D\Psi_{L,M}(u^{\ast},\alpha^{\ast},\omega^{\ast})]^{-1}\|_{\mathbb{U}\times\mathbb{A}\times\mathbb{B}\leftarrow\mathbb{U}\times\mathbb{A}\times\mathbb{B}}\\[3mm]
&&\cdot\|\Psi_{L,M}(u^{\ast},\alpha^{\ast},\omega^{\ast})\|_{\mathbb{U}\times\mathbb{A}\times\mathbb{B}}=0,
\end{array}
\end{equation*}
where
\begin{equation*}
r_{2}(L,M):=\min\left\{r_{1},\frac{1}{2\kappa\|[D\Psi_{L,M}(u^{\ast},\alpha^{\ast},\omega^{\ast})]^{-1}\|_{\mathbb{U}\times\mathbb{A}\times\mathbb{B}\leftarrow\mathbb{U}\times\mathbb{A}\times\mathbb{B}}}\right\}
\end{equation*}
with $r_{1}$ and $\kappa$ as in Proposition \ref{p_CS1}.
\end{proposition}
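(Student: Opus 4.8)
The plan is to verify assumption CS2 of \cite{mas15NM} by adapting the argument used for its RFDE counterpart in \cite{ab20mas}, the structural novelties being the role of the secondary (quadrature) discretization and the fact that, for REs, the space $\mathbb{U}$ carries the solution rather than its derivative. Throughout I write $Q_{L}:=P_{L}R_{L}$, a bounded linear projection onto the finite-dimensional subspace $\mathbb{W}_{L}:=\Pi_{L,m}^{+}\times\Pi_{L,m}^{-}\times\mathbb{B}$ whose norm is bounded uniformly in $L$, since the Lebesgue constants attached to the fixed abscissae $c_{1},\dots,c_{m}$ are. First I would make $D\Psi_{L,M}(u^{\ast},\psi^{\ast},\omega^{\ast})=I-Q_{L}D\Phi_{M}(u^{\ast},\psi^{\ast},\omega^{\ast})$ explicit: using Proposition \ref{p_AfMb}, the linearity of $\mathcal{G}$, of $\mathcal{B}$ in \eqref{B2} and of $p$, and the abbreviations $\mathfrak{L}_{M}^{\ast}:=\mathfrak{L}_{M}(\cdot;v^{\ast},\omega^{\ast})$, $\mathfrak{M}_{M}^{\ast}:=\mathfrak{M}_{M}(\cdot;v^{\ast},\omega^{\ast})$ in analogy with \eqref{L*M*}, one gets
\begin{equation*}
D\Phi_{M}(u^{\ast},\psi^{\ast},\omega^{\ast})(u,\psi,\omega)=\bigl(\mathfrak{L}_{M}^{\ast}[\mathcal{G}(u,\psi)_{(\cdot)}\circ s_{\omega^{\ast}}]+\omega\mathfrak{M}_{M}^{\ast},\ u_{1},\ \omega-p(u)\bigr).
\end{equation*}
Since $Q_{L}D\Phi_{M}(u^{\ast},\psi^{\ast},\omega^{\ast})$ takes values in $\mathbb{W}_{L}$, the operator $D\Psi_{L,M}(u^{\ast},\psi^{\ast},\omega^{\ast})$ is block upper-triangular with respect to the splitting $\mathbb{W}_{L}\oplus\ker Q_{L}$, with the identity on the second block; hence invertibility with a uniformly bounded inverse is equivalent to the same property for the restriction $\bigl(I-Q_{L}D\Phi_{M}(u^{\ast},\psi^{\ast},\omega^{\ast})\bigr)|_{\mathbb{W}_{L}}$ on the finite-dimensional space $\mathbb{W}_{L}$, the off-diagonal block being controlled by $\|Q_{L}\|\,\|D\Phi_{M}(u^{\ast},\psi^{\ast},\omega^{\ast})\|$, bounded uniformly in $L$ and $M$ thanks to \ref{N3} and \ref{N4}.

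Next I would use the splitting $D\Phi_{M}(u^{\ast},\psi^{\ast},\omega^{\ast})=\mathcal{A}+\mathcal{K}_{M}$, with $\mathcal{A}(u,\psi,\omega):=(0,u_{1},0)$ collecting the ``shift by one period'' and $\mathcal{K}_{M}$ collecting the quadrature term $\mathfrak{L}_{M}^{\ast}[\mathcal{G}(\cdot)_{(\cdot)}\circ s_{\omega^{\ast}}]$, the rank-one term $\omega\mathfrak{M}_{M}^{\ast}$ and the finite-rank $\mathbb{B}$-components. The point of the splitting is twofold: $I-\mathcal{A}$ is boundedly invertible with explicit inverse $(a,b,c)\mapsto(a,b+a_{1},c)$, and — crucially — $\mathcal{A}$, $(I-\mathcal{A})^{-1}$ and $(I-\mathcal{A})^{-1}Q_{L}\mathcal{K}_{M}$ all map $\mathbb{W}_{L}$ into itself while $Q_{L}\mathcal{A}=\mathcal{A}$ on $\mathbb{W}_{L}$, because the period shift of a piecewise polynomial relative to the mesh $\Omega_{L}^{+}$ of $[0,1]$ is again a piecewise polynomial relative to the mesh $\Omega_{L}^{-}$ of $[-1,0]$, hence interpolated exactly. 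Therefore, on $\mathbb{W}_{L}$,
\begin{equation*}
I-Q_{L}D\Phi_{M}(u^{\ast},\psi^{\ast},\omega^{\ast})=(I-\mathcal{A})\bigl(I-(I-\mathcal{A})^{-1}Q_{L}\mathcal{K}_{M}\bigr),
\end{equation*}
and it remains to invert the second factor uniformly in $L$ and $M$. Here I would run a perturbation argument in the spirit of collectively compact operator theory (as in \cite{ab20mas}): the limit operator $(I-\mathcal{A})^{-1}\mathcal{K}$, with $\mathcal{K}:=\lim_{M}\mathcal{K}_{M}$, is compact, since $\mathfrak{L}^{\ast}$ in \eqref{L2} is an integral operator smoothing into $C([0,1],\R^{d})$ while $\mathfrak{M}^{\ast}$ and the $\mathbb{B}$-parts are finite-rank; and $I-(I-\mathcal{A})^{-1}\mathcal{K}=(I-\mathcal{A})^{-1}\bigl(I-D\Phi(u^{\ast},\psi^{\ast},\omega^{\ast})\bigr)$ is invertible by Proposition \ref{p_Ax*2} (under its hyperbolicity hypothesis on $x^{\ast}$). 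A contradiction argument — assuming unit vectors $z_{n}\in\mathbb{W}_{L_{n}}$ with $\bigl(I-(I-\mathcal{A})^{-1}Q_{L_{n}}\mathcal{K}_{M_{n}}\bigr)z_{n}\to0$, extracting a convergent subsequence of $(I-\mathcal{A})^{-1}Q_{L_{n}}\mathcal{K}_{M_{n}}z_{n}$, passing to the limit by its pointwise convergence to $(I-\mathcal{A})^{-1}\mathcal{K}$, and invoking injectivity of $I-(I-\mathcal{A})^{-1}\mathcal{K}$ — then yields a uniform lower bound on $\|(I-(I-\mathcal{A})^{-1}Q_{L}\mathcal{K}_{M})z\|$ over unit $z\in\mathbb{W}_{L}$ for all large $L$, $M$, whence the first assertion with a bound independent of such $L$, $M$.

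The delicate point, and the place where the RE-specific features enter, is making the above limit passage legitimate: unlike in \cite{ab20mas}, the $\mathcal{K}_{M}$'s are \emph{quadratures}, not integral operators, so they converge to $\mathcal{K}$ only in a strong-operator sense and only on sufficiently regular arguments, and the two discretization levels $L$ and $M$ are independent. One must therefore exploit that (i) elements of $\mathbb{W}_{L}$ are continuous piecewise polynomials; (ii) the reference state $v^{\ast}=\mathcal{G}(u^{\ast},\psi^{\ast})$ is smooth, since the $1$-periodic solution $x^{\ast}$ of \eqref{eq:RE} inherits regularity from $K\in\mathcal{C}^{1}$ (\ref{N4}) through the fixed-point relation $x^{\ast}(t)=\omega^{\ast}\int_{-\tau/\omega^{\ast}}^{0}K(\omega^{\ast}\theta,x^{\ast}(t+\theta))\dd\theta$; and (iii) by \ref{N3} the interpolatory quadrature is convergent on $B^{\infty}([0,1],\R^{d})$, hence on the integrands $\theta\mapsto D_{2}K(\omega^{\ast}\theta,v^{\ast}(t+\theta))v(t+\theta)$ arising here, with convergence made uniform in $t\in[0,1]$ and over bounded arguments by an equicontinuity estimate based on \ref{T3} and \ref{T5}. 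I expect this verification — reconciling the smoothing of the true operator $\mathfrak{L}^{\ast}$ with the merely pointwise, not norm, convergence of its quadrature approximants across the two parameters, and ensuring the requisite compactness on the subspaces $\mathbb{W}_{L}$ — to be the main technical obstacle, and it is precisely the portion to be detailed in Appendix \ref{s_appendix}.

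For the remaining (limit) assertion, the first part gives $\|[D\Psi_{L,M}(u^{\ast},\psi^{\ast},\omega^{\ast})]^{-1}\|\le C$ for $L$, $M$ large, so $r_{2}(L,M)$ is bounded below by a positive constant and the whole expression is $\le C'\|\Psi_{L,M}(u^{\ast},\psi^{\ast},\omega^{\ast})\|$; it thus suffices to prove the consistency $\|\Psi_{L,M}(u^{\ast},\psi^{\ast},\omega^{\ast})\|\to0$. Since $(u^{\ast},\psi^{\ast},\omega^{\ast})$ is a fixed point of $\Phi$ in \eqref{Phi},
\begin{equation*}
\Psi_{L,M}(u^{\ast},\psi^{\ast},\omega^{\ast})=\bigl((u^{\ast},\psi^{\ast},\omega^{\ast})-Q_{L}(u^{\ast},\psi^{\ast},\omega^{\ast})\bigr)-Q_{L}\bigl(\Phi_{M}(u^{\ast},\psi^{\ast},\omega^{\ast})-\Phi(u^{\ast},\psi^{\ast},\omega^{\ast})\bigr);
\end{equation*}
the first bracket tends to $0$ as $L\to\infty$ because $u^{\ast}$ and $\psi^{\ast}$ are continuous (indeed $\mathcal{C}^{1}$, by (ii) above) and piecewise polynomial interpolation converges on such functions, while the second tends to $0$ as $M\to\infty$ because $\|Q_{L}\|$ is uniformly bounded and the quadrature error $\Phi_{M}(u^{\ast},\psi^{\ast},\omega^{\ast})-\Phi(u^{\ast},\psi^{\ast},\omega^{\ast})=\bigl(F_{M}(v^{\ast}_{(\cdot)}\circ s_{\omega^{\ast}})-F(v^{\ast}_{(\cdot)}\circ s_{\omega^{\ast}}),0,0\bigr)$ vanishes by \ref{N3}--\ref{N4}, uniformly in $t$ by equicontinuity of $\theta\mapsto K(\omega^{\ast}\theta,v^{\ast}(t+\theta))$. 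This completes the plan.
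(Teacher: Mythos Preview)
Your plan for the \emph{consistency} part (the final limit assertion) is correct and is essentially what the paper does: split $\Psi_{L,M}(u^{\ast},\psi^{\ast},\omega^{\ast})$ into the primary interpolation error on the smooth fixed point and the secondary quadrature error, each vanishing.

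For the invertibility and uniform bound, your route is genuinely different from the paper's. Your block-triangular reduction to $\mathbb{W}_{L}$ and the factorization via $\mathcal{A}(u,\psi,\omega)=(0,u_{1},0)$ are correct and elegant: you are right that $Q_{L}\mathcal{A}=\mathcal{A}$ on $\mathbb{W}_{L}$ because the meshes $\Omega_{L}^{\pm}$ correspond exactly under the period shift, and this neatly disposes of the obstruction the paper flags, namely that $\|(I_{\mathbb{A}}-\pi_{L}^{-}\rho_{L}^{-})u_{1}\|_{\mathbb{A}}$ need not vanish for general $u\in\mathbb{U}=B^{\infty}$. The paper does \emph{not} do this; instead it proves invertibility of $D\Psi_{L,M}(u^{\ast},\psi^{\ast},\omega^{\ast})$ by reconstructing the solution of \eqref{Ax*20LM} explicitly, exactly as in Proposition~\ref{p_Ax*2}: it introduces the discrete evolution operator $T_{L,M}^{\ast}(t,s)$ of \eqref{LrfdetLM}, shows $T_{L,M}^{\ast}(1,0)\to T^{\ast}(1,0)$ in norm, transfers the simple eigenvalue $1$ to a nearby simple $\mu_{L,M}$, and solves a Fredholm-type equation in $X$ via the decomposition $X=R_{L,M}\oplus K_{L,M}$. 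For the uniform bound it then uses the substitution $z:=u-u_{0}$, $\gamma:=\psi-\psi_{0}$, which forces $z$ into $C([0,1],\mathbb{R}^{d})$, and applies Banach's perturbation lemma in $C^{+}\times\mathbb{A}$ rather than in $\mathbb{U}\times\mathbb{A}$.

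The step that concerns me in your plan is precisely the one you flag: ``extracting a convergent subsequence of $(I-\mathcal{A})^{-1}Q_{L_{n}}\mathcal{K}_{M_{n}}z_{n}$''. This needs collective compactness of the family $\{(I-\mathcal{A})^{-1}Q_{L}\mathcal{K}_{M}\}$ on the union of the $\mathbb{W}_{L}$'s, and I do not see how to obtain it. The $\mathbb{U}$-part of $\mathcal{K}_{M}z_{n}$ is a finite sum of point evaluations $t\mapsto \sum_{i}w_{i}\,D_{2}K(\omega^{\ast}\alpha_{i},v^{\ast}(t+\alpha_{i}))\,\mathcal{G}(u_{n},\psi_{n})(t+\alpha_{i})$; its modulus of continuity is controlled by that of $\mathcal{G}(u_{n},\psi_{n})$, and unit piecewise polynomials on meshes of width $1/L_{n}$ are \emph{not} uniformly equicontinuous as $L_{n}\to\infty$. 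So Arzel\`a--Ascoli fails, and neither $Q_{L_{n}}$ nor $(I-\mathcal{A})^{-1}$ restores smoothing. Your points (i)--(iii) give strong (pointwise) convergence on each fixed continuous argument, but not the precompactness of images you need. This is exactly the place where the paper's $z=u-u_{0}$ trick earns its keep: it replaces the generic $B^{\infty}$ data by an equation whose unknown lies in $C^{+}$, where $\pi_{L}^{+}\rho_{L}^{+}\mathcal{K}_{M}^{\ast,\pm}\to\mathcal{K}^{\ast,\pm}$ holds in \emph{operator norm} (the smoothing of $\mathfrak{L}^{\ast}$ enters here), so Banach perturbation applies directly and no collective-compactness argument is required. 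If you want to keep your factorization, I would suggest grafting the paper's device onto it: after reducing to $\mathbb{W}_{L}$ and factoring out $\mathcal{A}$, recast the remaining fixed-point equation for $z_{L,M}$ in $C^{+}\times\mathbb{A}$ and run Banach perturbation there, rather than attempting an Anselone-type argument in $B^{\infty}$.
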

\begin{proof}The proof of this proposition is a bit laborious, just like its counterpart in the RFDE case. The latter has been proved in \cite{ab20mas} in several steps, the first of which concerns the invertibility of the operator $D\Psi_{L,M}(u^*,\alpha^*,\omega^*)$ defined in \eqref{PsiLM} for $L,M$ large enough and can be proved as \cite[Proposition 3.11]{ab20mas}. The second step concerns the uniform boundedness of $D\Psi_{L,M}^{-1}(u^*,\alpha^*,\omega^*)$ and follows the ideas of \cite[Lemma 3.12]{ab20mas}. The latter is based on \cite[Proposition A.8]{ab20mas}, which states that
$
\lim_{L,M\to\infty} \omega_{L,M}=\omega,
$
where $\omega_{L,M}$ is the last component of the solution of the discretized version of \eqref{Ax*20}. The limit does not necessarily hold in the present case, however it can be proved that $\vert\omega_{L,M}-\omega\vert$ is uniformly bounded. This follows by the fact that $\|\xi_{L,M,2}^*-\xi_2^*\|_X$ is in turn uniformly bounded (but not necessarily vanishing) thanks to the choice of $\mathbb{U}$ in (T1), where $\xi_{L,M,2}^*$ is the discrete version of $\xi_{2}^*$. As a consequence, the error component called $\varepsilon_{\omega,L,M}$ in \cite[Lemma 3.12]{ab20mas} cannot be proven to vanish in the present case, but would still be uniformly bounded, and that is enough to complete the second step of the proof. The third and last step consists in proving that $\Psi_{L,M}(u^*,\alpha^*,\omega^*)$ vanishes and goes as the proof of \cite[Proposition 3.13]{ab20mas}.\end{proof}
\subsection{Final convergence results}
\label{s_convresult}
From the propositions in the previous subsections we can conclude that our problem of finding a fixed point of $\Phi$ in \eqref{Phi} satisfies all the assumptions required by \cite{mas15NM} under certain hypotheses on the state spaces, the discretization and the regularity of the right-hand side. As a consequence, the relevant FEM converges.
\begin{theorem}[\protect{\cite[Theorem 2, page 539]{mas15NM}}] Under (T1), (T2), (T3), (T4), (T5), (N1), (N2) and (N3), there exists a positive integer $\hat{N}$ such that, for all $L,M\geq\hat{N}$, the operator $R_{L}\Phi_{M}P_{L}$ has a fixed point $(u_{L,M}^{\ast},\alpha_{L,M}^{\ast},\omega_{L,M}^{\ast})$ and
\begin{equation*}
\setlength\arraycolsep{0.1em}\begin{array}{rcl}
\varepsilon_{L,M}:=\|(v_{L,M}^{\ast},\omega_{L,M}^{\ast})-(v^{\ast},\omega^{\ast})\|_{\mathbb{V}\times\mathbb{B}}\leq2&&\cdot\|[D\Psi_{L,M}(u^{\ast},\alpha^{\ast},\omega^{\ast})]^{-1}\|_{\mathbb{U}\times\mathbb{A}\times\mathbb{B}\leftarrow\mathbb{U}\times\mathbb{A}\times\mathbb{B}}\\[2mm]
&&\cdot\|\Psi_{L,M}(u^{\ast},\alpha^{\ast},\omega^{\ast})\|_{\mathbb{U}\times\mathbb{A}\times\mathbb{B}},
\end{array}
\end{equation*}
where $v_{L,M}^*=\mathcal{G}(u_{L,M}^*,\alpha_{L,M}^*)$ and $v^*=\mathcal{G}(u^*,\alpha^*)$.
\end{theorem}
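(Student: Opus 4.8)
The plan is to observe that this statement is nothing but a specialization of the abstract convergence result \cite[Theorem 2, page 539]{mas15NM} to the PAF \eqref{PAF}--\eqref{Phi} associated, via \eqref{G2}--\eqref{B2}, to the reformulation \eqref{convBVPRE}. Consequently, all that remains to prove is that, under \ref{T1}, \ref{T2}, \ref{T5}, \ref{N1}, \ref{N2}, \ref{N3} and \ref{N4}, every abstract hypothesis invoked by that theorem holds: the theoretical assumptions A$\mathfrak{F}\mathfrak{B}$, A$\mathfrak{G}$, A$x^\ast1$, A$x^\ast2$, and the numerical assumptions A$\mathfrak{F}_K\mathfrak{B}_K$, CS1, CS2. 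Once these are in place, the existence of $\hat N$, of the fixed point $(u_{L,M}^\ast,\psi_{L,M}^\ast,\omega_{L,M}^\ast)$ of $R_L\Phi_M P_L$ for $L,M\geq\hat N$, and the displayed Newton--Kantorovich-type bound follow verbatim from \cite{mas15NM}, the bound being just the restriction of the abstract estimate to the $\mathbb{V}\times\mathbb{B}$-components through $v_{L,M}^\ast=\mathcal{G}(u_{L,M}^\ast,\psi_{L,M}^\ast)$ and $v^\ast=\mathcal{G}(u^\ast,\psi^\ast)$.

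First I would note that \ref{N4} subsumes \ref{T3} and \ref{T4}: if $K\in\mathcal{C}^1(\R\times\R^d,\R^d)$ then $K$ is continuous and possesses the partial derivatives $D_1K,D_2K$, which is \ref{T3}, while continuity of $D_2K$ yields in particular the measurability required in \ref{T4}. Hence the full lists of hypotheses used in Subsections \ref{sub_theoretical} and \ref{sub_numerical} are available. Then I would simply run down the assumptions: A$\mathfrak{F}\mathfrak{B}$ follows from Proposition \ref{p_Afb} (which uses \ref{T1}, \ref{T2}, \ref{T3}) together with the fact that $\mathcal{B}$ in \eqref{B2} is linear, hence trivially Fr\'echet-differentiable; A$\mathfrak{G}$ is exactly Proposition \ref{l_G} (using \ref{T2}); A$x^\ast1$ is Proposition \ref{p_Ax*1} (using \ref{T1}, \ref{T2}, \ref{T3}, \ref{T5}); A$\mathfrak{F}_K\mathfrak{B}_K$ follows from Proposition \ref{p_AfMb} (using \ref{T1}, \ref{T2}, \ref{T3}) and again linearity of $\mathcal{B}$; CS1 is Proposition \ref{p_CS1} (using \ref{T1}, \ref{T2}, \ref{T3}, \ref{T5}, \ref{N1}, \ref{N2}); and CS2 is Proposition \ref{p_CS2b} (using \ref{T1}, \ref{T2}, \ref{T3}, \ref{T5}, \ref{N1}, \ref{N2}, \ref{N3}, \ref{N4}), whose limit relation is moreover what guarantees that the right-hand side of the displayed estimate vanishes as $L,M\to\infty$.

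The one assumption needing slightly more care is A$x^\ast2$, handled by Proposition \ref{p_Ax*2} (using \ref{T1}, \ref{T2}, \ref{T3}, \ref{T4}): its statement carries the extra requirement that $1\in\sigma(T^\ast(1,0))$ be simple, i.e.\ that the Floquet multiplier $1$ of the linearized RE be nondegenerate. This is precisely the standing nondegeneracy/hyperbolicity hypothesis on the periodic solution $x^\ast$ mentioned after Proposition \ref{p_Ax*1}, which I would state explicitly as part of the hypotheses of the theorem; with it, Proposition \ref{p_Ax*2} gives invertibility of $I-D\Phi(u^\ast,\psi^\ast,\omega^\ast)$ and hence A$x^\ast2$. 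I do not expect any genuine obstacle here: the entire argument is the bookkeeping of matching each \cite{mas15NM} assumption to the corresponding proposition and checking that the norm identifications on $\mathbb{U}\times\mathbb{A}\times\mathbb{B}$ and $\mathbb{V}\times\mathbb{B}$ are the ones used in Section \ref{s_convergence}; the substantive work has already been carried out in Propositions \ref{p_Afb}--\ref{p_CS2b} (and, for the RE-specific points, in Appendix \ref{s_appendix}). The only mildly delicate point to spell out is that the $\mathbb{B}=\R$ component (the period $\omega$) is carried along correctly through $\mathcal{G}$, $\mathcal{F}$ and $\mathcal{B}$, so that the abstract fixed point of $R_L\Phi_M P_L$ is exactly the discrete collocation solution of Section \ref{s_piecewise}.
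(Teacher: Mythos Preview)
Your proposal is correct and takes essentially the same approach as the paper: the theorem is stated as a direct citation of \cite[Theorem 2]{mas15NM}, and its applicability rests precisely on the verification of the abstract assumptions A$\mathfrak{F}\mathfrak{B}$, A$\mathfrak{G}$, A$x^\ast1$, A$x^\ast2$, A$\mathfrak{F}_K\mathfrak{B}_K$, CS1 and CS2 through Propositions \ref{p_Afb}--\ref{p_CS2b}, together with the standing simplicity hypothesis on the multiplier $1$ needed for Proposition \ref{p_Ax*2}. Your observation that \ref{N4} subsumes \ref{T3} and \ref{T4} is exactly what justifies the omission of the latter from the theorem's hypothesis list.
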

Thanks to Proposition \ref{p_CS2b}, the error on $(v^{\ast},\omega^{\ast})$ is determined by the last factor, namely the \emph{consistency error}. For the latter, thanks to basic results on polynomial interpolation, we can write
\begin{equation*}
\|\Psi_{L,M}(u^{\ast},\alpha^{\ast},\omega^{\ast})\|_{\mathbb{U}\times\mathbb{A}\times\mathbb{B}}\leq\varepsilon_{L}+\max\{\Lambda_{m},1\}\varepsilon_{M},
\end{equation*}
where $\Lambda_m$ is the Lebesgue constant associated to the collocation nodes and the terms
\begin{equation}\label{epsL}
\varepsilon_{L}:=\|(I_{\mathbb{U}\times\mathbb{A}\times\mathbb{B}}-P_{L}R_{L})(u^{\ast},\alpha^{\ast},\omega^{\ast})\|_{\mathbb{U}\times\mathbb{A}\times\mathbb{B}}
\end{equation}
and
\begin{equation}\label{epsM}
\varepsilon_{M}:=\|\Phi_{M}(u^{\ast},\alpha^{\ast},\omega^{\ast})-\Phi(u^{\ast},\alpha^{\ast},\omega^{\ast})\|_{\mathbb{U}\times\mathbb{A}\times\mathbb{B}}.
\end{equation}
are called respectively {\it primary} and {\it secondary} consistency errors.

\bigskip
As for $\varepsilon_{L}$ in \eqref{epsL}, which concerns only the primary discretization, a bound can be obtained from the regularity of $u^{\ast}$ through the following theorem.
\begin{theorem}\label{t_epsL}
Let $K\in\mathcal{C}^{p}(\mathbb{R}\times\mathbb{R}^d,\mathbb{R}^{d})$ for some integer $p\geq0$. Then, Under (T1), (T2), (N1) and (N2), it holds that $u^{\ast}\in C^{p+1}([0,1],\mathbb{R}^{d})$, $\alpha^{\ast}\in C^{p+1}([-1,0],\mathbb{R}^{d})$, $v^{\ast}\in C^{p+1}([-1,1],\mathbb{R}^{d})$ and
\begin{equation}\label{epsLh}
\varepsilon_{L}=O\left(h^{\min\{m,p\}+1}\right).
\end{equation}
\end{theorem}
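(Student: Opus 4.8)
The statement splits into a regularity claim, $u^{\ast},\psi^{\ast},v^{\ast}\in C^{p+1}$, and the bound \eqref{epsLh} on the primary consistency error, and I would prove them in that order.

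\emph{Regularity, and the main difficulty.} At the fixed point $(u^{\ast},\psi^{\ast},\omega^{\ast})$ one has $u^{\ast}(t)=F(v^{\ast}_{(\cdot)}\circ s_{\omega^{\ast}})$ with $v^{\ast}=\mathcal G(u^{\ast},\psi^{\ast})$ and $\psi^{\ast}=u^{\ast}_{1}$; by the correspondence with the $1$-periodic solution of \eqref{eq:RE} this reads $x^{\ast}(t)=\omega^{\ast}\int_{-\tau/\omega^{\ast}}^{0}K(\omega^{\ast}\theta,x^{\ast}(t+\theta))\dd\theta$ for all $t\in\R$, and one checks $\psi^{\ast}=x^{\ast}|_{[-1,0]}$, $v^{\ast}=x^{\ast}|_{[-1,1]}$, so it is enough to show $x^{\ast}\in C^{p+1}(\R,\R^{d})$. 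I would start from the fact that $x^{\ast}$ coincides a.e.\ with a continuous function (its right-hand side being an integral), rewrite that integral with $t$-dependent endpoints through the substitution $\sigma=t+\theta$, i.e.\ $x^{\ast}(t)=\omega^{\ast}\int_{t-\tau/\omega^{\ast}}^{t}K(\omega^{\ast}(\sigma-t),x^{\ast}(\sigma))\dd\sigma$, and then bootstrap by differentiation under the integral sign / Leibniz's rule: the smoothing effect of the integration raises the regularity of the right-hand side by one whenever $x^{\ast}$ already matches the regularity of $K$, so $K\in C^{p}$ forces $x^{\ast}\in C^{p+1}$. This is the RE analogue of the regularity step used for RFDEs in \cite{ab20mas}; the new feature is that $\omega^{\ast}$ appears both inside $K$ and in the endpoints and must be carried along, while periodicity of $x^{\ast}$ makes all one-sided derivatives match at integer times, so that restricting/gluing yields $C^{p+1}$ functions on $[0,1]$, $[-1,0]$ and $[-1,1]$. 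I expect this step to be the hard part: one must justify differentiation under the integral at every stage (in particular upgrade the a priori $B^{\infty}$ solution first to continuous and then to differentiable), carefully track how much smoothness of $K$ is consumed against how much of $x^{\ast}$ is already available, and deal with the $\omega^{\ast}$-dependent kernel and limits.

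\emph{Reduction of $\varepsilon_{L}$ to interpolation errors.} Because $P_{L}R_{L}$ leaves the $\mathbb B=\R$ component unchanged, $\varepsilon_{L}$ equals, up to the constants of the product norm, the larger of $\|u^{\ast}-\pi_{L}^{+}\rho_{L}^{+}u^{\ast}\|_{\infty}$ on $[0,1]$ and $\|\psi^{\ast}-\pi_{L}^{-}\rho_{L}^{-}\psi^{\ast}\|_{\infty}$ on $[-1,0]$; by the previous step these are errors of interpolating $C^{p+1}$ functions by continuous piecewise polynomials of degree $m$ on the uniform mesh of width $h$ of \ref{N1}--\ref{N2}, the nodes being $\{0\}$ (resp.\ $\{-1\}$) together with the $Lm$ inner nodes. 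By standard local Lagrange estimates, on each subinterval the ordinary degree-$m$ interpolant through the left mesh node and the $m$ inner nodes has error $O\!\left(h^{\min\{m+1,p+1\}}\right)$.

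\emph{From local to global, and conclusion.} The subtlety is that $\pi_{L}^{+}\rho_{L}^{+}u^{\ast}$ need not interpolate $u^{\ast}$ at the interior mesh nodes $t_{i}^{+}$, where continuity of the piecewise polynomial replaces an interpolation condition; hence the mesh-node errors $e_{i}:=(\pi_{L}^{+}\rho_{L}^{+}u^{\ast})(t_{i}^{+})-u^{\ast}(t_{i}^{+})$ satisfy a recursion $e_{i}=\gamma e_{i-1}+O\!\left(h^{\min\{m+1,p+1\}}\right)$, $e_{0}=0$, with $\gamma=\ell_{m,0}(1)$ the value at the right endpoint of the Lagrange fundamental polynomial of the left node ($|\gamma|=1$ for symmetric abscissae such as Gauss points, and no worse is needed). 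Summing gives $|e_{i}|=O\!\left(L\,h^{\min\{m+1,p+1\}}\right)=O\!\left(h^{\min\{m,p\}}\right)$, and adding back the local interpolation error yields $\|u^{\ast}-\pi_{L}^{+}\rho_{L}^{+}u^{\ast}\|_{\infty}=O\!\left(h^{\min\{m,p\}}\right)$, and likewise for $\psi^{\ast}$ — equivalently, the Lebesgue constant of the globally continuous piecewise interpolant grows like $1/h$ and absorbs one power relative to the local rate. Combined with the reduction above, this is precisely \eqref{epsLh}.
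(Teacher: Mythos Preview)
The paper does not actually prove this theorem: after stating it, the text only invokes ``basic results on polynomial interpolation'' and implicitly leans on the companion RFDE analysis \cite{ab20mas}; the one argument spelled out here is Lemma~\ref{l_v*}, which is precisely the $p=0$ case of your regularity bootstrap. So there is little to compare against, and your proposal is considerably more explicit than what the paper offers.

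Your regularity step is correct and is the natural extension of Lemma~\ref{l_v*}: rewriting the fixed-point relation as $x^{\ast}(t)=\omega^{\ast}\int_{t-\tau/\omega^{\ast}}^{t}K(\omega^{\ast}(\sigma-t),x^{\ast}(\sigma))\dd\sigma$ and iterating Leibniz's rule gains one derivative at each pass, capped by the smoothness of $K$; periodicity then glues the restrictions into $C^{p+1}$ functions on the three intervals.

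The interpolation step contains a genuine caveat. Your recursion $e_{i}=\gamma\,e_{i-1}+O(h^{\min\{m+1,p+1\}})$ with $\gamma=\ell_{m,0}(1)$ is exactly right, and for abscissae symmetric about $1/2$ (Gauss--Legendre, Chebyshev, as used throughout Section~\ref{s_results}) one indeed has $|\gamma|=\prod_{j=1}^{m}(1-c_{j})/c_{j}=1$, so summing over $L=1/h$ terms loses one power of $h$ and yields \eqref{epsLh}. But your parenthetical ``and no worse is needed'' is not justified: for asymmetric abscissae $|\gamma|$ can exceed $1$ (e.g.\ $m=1$, $c_{1}=0.1$ gives $\gamma=-9$), and then the mesh-node errors grow like $|\gamma|^{L}$, so neither your argument nor the bound \eqref{epsLh} survives --- a quick check with $u^{\ast}(t)=t^{2}$ already shows the blow-up. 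This is really a missing hypothesis in the theorem (tacit in the phrase ``orthogonal collocation'' and in the node choices the paper actually uses) rather than a flaw in your strategy; still, you should state the condition $|\ell_{m,0}(1)|\le1$ explicitly instead of waving it away.
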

\begin{proof}
Recall that $v^{\ast}=\mathcal{G}(u^{\ast},\alpha^{\ast})$ satisfies \eqref{rescaledBVPRE}, hence its periodic extension to $[-1,\infty)$ is a periodic solution of \eqref{eq:RE} modulo rescaling of time, and it is  bounded by (T2). Thus, if $K$ is continuous, then the periodic extension of $v^{\ast}$ is continuous in $[0,+\infty)$, thus also in $[-1,\infty)$ by periodicity. Using the continuity of $K$ again, we obtain that $v^*$ is continuously differentiable in $[0,\infty)$, thus also in $[-1,\infty)$ by periodicity. As a consequence, if $p=0$, $v^{\ast}\in C^1([-1,1],\mathbb{R}^d)$. Since $u^*=v^*\vert_{[0,1]}$ and $\alpha^*=v^*\vert_{[-1,1]}$, we immediately have also $u^{\ast}\in C^{1}([0,1],\mathbb{R}^{d})$ and $\alpha^*\in C^1([-1,0],\mathbb{R}^d)$. The whole reasoning can be iterated, proving the first part of the result.

To prove \eqref{epsLh}, we observe first that 
\begin{equation}\label{epsLu1}
\|u^{\ast}-\pi_{L}^{+}\rho_{L}^{+}u^{\ast}\|_{\mathbb{U}}\leq\frac{\|{u^{\ast}}^{(m+1)}\|_{\infty}}{(m+1)!}\cdot h^{m+1}
\end{equation}
holds if $p\geq m$, while
\begin{equation}\label{epsLu2}
\|u^{\ast}-\pi_{L}^{+}\rho_{L}^{+}u^{\ast}\|_{\mathbb{U}}\leq(1+\Lambda_{m})\left(\frac{h}{2}\right)^{p+1}\frac{c_{p+1}}{m^{p+1}}\cdot\|{u^{\ast}}^{(p+1)}\|_{\infty}
\end{equation}
holds if $p\leq m$, with $c_{p+1}$ a positive constant independent of $m$. \eqref{epsLu1} is a direct consequence of the standard Cauchy interpolation reminder, see, e.g., \cite[Section 6.1, Theorem 2]{kc02}. \eqref{epsLu2} is a direct consequence of Jackson's theorem on best uniform approximation, see, e.g., \cite[(2.9) and (2.11)]{mas15I}. The same arguments hold for $\alpha^*$.

\end{proof}
\bigskip
On the other hand, $\varepsilon_{M}$ in \eqref{epsM} concerns only the secondary discretization and is therefore absent whenever the latter is not needed. However, concerning our specific problem, according to \eqref{Phi} and \eqref{PhiM}, it can be written as
\begin{equation}\label{epsMU}
\varepsilon_{M}:=\|F_{M}(v^{\ast}_{\cdot}\circ s_{\omega^{\ast}})-F(v^{\ast}_{\cdot}\circ s_{\omega^{\ast}})\|_{\mathbb{U}}
\end{equation}
and needs to be considered if the integral in \eqref{hpRHS} cannot be exactly computed, in which case \eqref{epsMU} is basically a quadrature error. Assuming that $M$ varies proportionally to $L$, one can choose a formula that guarantees at least the same order of the primary consistency error, so that the order of convergence of the final error is in fact the one given by theorem \ref{t_epsL}.

\begin{remark}\label{r_representation}
In principle, one could discretize the problem by choosing, for each mesh interval, a set of \emph{representation} nodes used to interpolate which are independent from the collocation nodes. That would mean that the unknowns of the discrete problem are given by the values of the relevant functions at the representation nodes, while the equations need to be satisfied at the collocation nodes. If $x^r_{L,M}$ is the vector of the unknowns and $Q_L:X_L\to X$ is the prolongation operator corresponding to the representation nodes (while $P_L,R_L$ refer to the collocation ones), the problem actually reads
$R_LQ_Lx^r_{L,M}=R_L\Phi_MQ_Lx^r_{L,M}.$
Thus, the vector $x^*_{L,M}$ given by the values of the relevant function at the collocation nodes is the solution of the discrete fixed point problem, in fact,
$$
x^*_{L,M}=R_LQ_Lx^r_{L,M}=R_L\Phi_MQ_Lx^r_{L,M}=R_L\Phi_MP_LR_LQ_Lx^r_{L,M}=R_L\Phi_MP_Lx^*_{L,M}.
$$
\end{remark}

\begin{remark}\label{r_altRHS}
The entire convergence analysis can as well be carried out for right-hand sides of the form \eqref{altRHS}. In this case, the different theoretical and numerical assumptions read
\begin{itemize}
\item[(T3)] $k:\mathbb{R}\to\mathbb{R}^d$ is measurable;
\item[(T4)] $f\in\mathcal{C}^1(\mathbb{R}^d,\mathbb{R}^d)$;
\item[(T5)] there exist $r>0$ and $\kappa\geq0$ such that
\begin{equation*}
\setlength\arraycolsep{0.1em}\begin{array}{rcl}
\displaystyle\Bigg\|f'\left(\omega\int_0^{\frac{\tau}{\omega}}k(\theta)v(t-\theta)\dd\theta\right)&-&\displaystyle f'\left(\omega^*\int_0^{\frac{\tau}{\omega^*}}k(\theta)v^*(t-\theta)\dd\theta)\right)
\Bigg\|_{\mathbb{R}^{d}}\\[4mm]
&\leq&\kappa\|(v_t,\omega)-(v^{\ast}_{t}, \omega^{\ast})\|_{X\times\mathbb{R}}
\end{array}
\end{equation*}
for every $(v_t,\omega)\in\overline B((v^{\ast}_{t},\omega^{\ast}),r)$, uniformly with respect to $t\in[0,1]$;
\item[(N4)] $k\in\mathcal{C}(\mathbb{R},\mathbb{R}^{d})$.
\end{itemize}
Moreover, the above can be easily further generalized to the case
\begin{equation}\label{sirsRHS}
F(\alpha)=f\left(\int_{0}^{\tau_1}k_1(\sigma)\alpha(-\sigma)\dd\sigma,\ldots,\int_{0}^{\tau_n}k_n(\sigma)\alpha(-\sigma)\dd\sigma\right).
\end{equation}
\end{remark}
\section{Results}\label{s_results}
This section deals with the numerical computation of periodic solutions of some specific REs from the field of population dynamics. In particular, in Subsection \ref{s_numerical} we will provide experimental proof of the order of convergence \eqref{epsLh}, while in Subsection \ref{s_applications} we will show, by means of two examples, how bifurcations may be detected thanks to the computed periodic solutions.

\subsection{Numerical tests}
\label{s_numerical}

\bigskip
The first RE that we consider is
\begin{equation}\label{eq:re}
x(t)=\frac{\gamma}{2}\int_{-3}^{-1}x(t+\theta)(1-x(t+\theta))\dd\theta,
\end{equation}
for which, as shown in \cite{bdls16}, the exact expression of the periodic solution between a Hopf bifurcation (at $\gamma=2+\pi/2$) and the first period doubling (at $\gamma\approx 4.327$) is $x(t)=\sigma+A\sin\left(\pi t/2\right)$, where $\sigma=1/2+\pi/(4\gamma)$ and $A^2= 2\sigma\left(1-1/\gamma-\sigma\right)$.
The integral representing the distributed delay is approximated through a Clenshaw-Curtis quadrature \cite{tref00} rescaled to the interval $[-3,-1]$.

Starting from the exact solution at $\gamma=4$, the branch of periodic orbits is continued up to the first period doubling after the Hopf bifurcation.
The continuation is performed using a trivial phase condition by forcing $x(0)=\sigma$, and Chebyshev extrema as collocation points. The left panel of Figure \ref{fig:quadratic_errs} confirms the $O(h^{m+1})$ behavior (being $p=+\infty$).
\begin{figure}
%\sidecaption[t]
\centering
\includegraphics[scale=0.75]{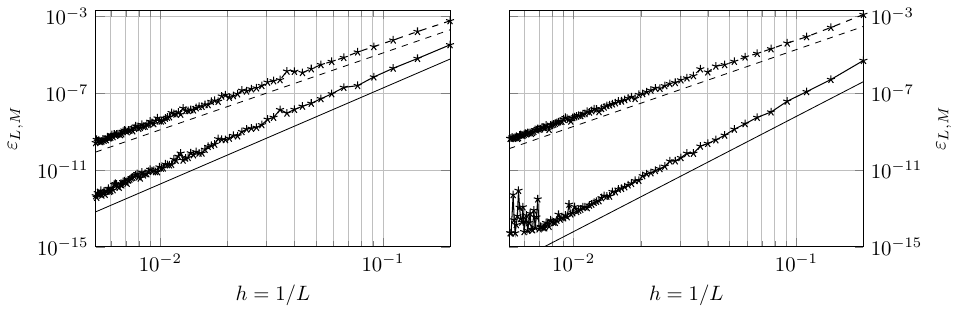}
\caption{Error on the periodic solution of \eqref{eq:re} at $\gamma=4.327$. Left: $m=3$ (dashed line) and $m=4$ (solid line) using Chebyshev points, compared to straight lines having slope 4 (dashed) and 5 (solid). Right: $m=3$ (dashed line) and $m=5$ (solid line) using Gauss-Legendre points. Original figures from \cite{aa20}.}
\label{fig:quadratic_errs}
\end{figure}
Given the experimental proof, found in \cite{elir00}, that in the case of RFDEs the order of convergence increases from $m$ to $m+1$ when using Gauss-Legendre collocation points, we wonder whether a similar increase can be observed in the case of REs. The above experiment is then replicated using such collocation points, and the right panel of Figure \ref{fig:quadratic_errs} confirms the same $O(h^{m+1})$ behavior as the case of Chebyshev extrema.

\bigskip
Next, we show that we obtain the same results with an RE that is defined by a right-hand side of the form \eqref{sirsRHS} as
\begin{equation}\label{sirsRE}
x(t)=\gamma\displaystyle\left(1-\int_0^1x(t-s)\dd s\right)\int_0^1\alpha s^2 e^{-10s}x(t-s)\dd s,
\end{equation}
where $\alpha:=1/\int_0^1s^2 e^{-10s}\dd s=500e^{10}/(e^{10}-61)$, while $\gamma$ is the varying parameter.
As shown in \cite{sdv21}, a Hopf bifurcation occurs when $\log \gamma \approx 1.6553$.

Starting from a perturbation of the equilibrium at the Hopf bifurcation point, the branch of periodic orbits is continued up to $\log\gamma=1.75$. Given the absence of an exact expression of the true solution, unlike the case \eqref{eq:re}, the error is computed with respect to a reference solution which is in turn computed using $L=1000$ and $m=4$. Figure \ref{fig:prova_SIRS} confirms again the $O(h^{m+1})$ behavior regardless of the chosen collocation nodes.
\begin{figure}
%\sidecaption[t]
\centering
\includegraphics[scale=0.75]{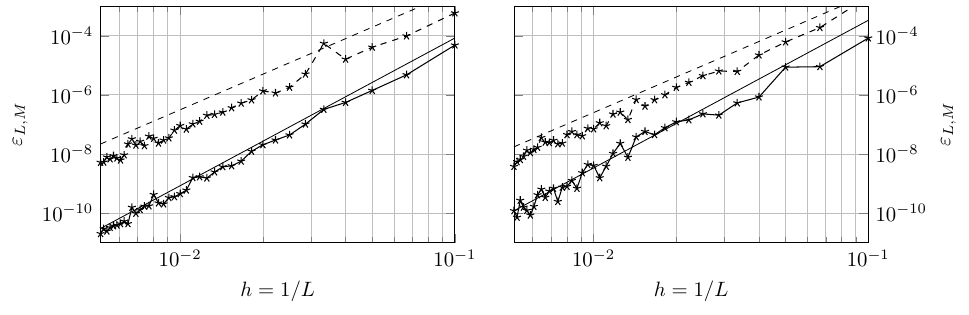}
\caption{Error on the periodic solution of \eqref{sirsRE} at $\log\gamma=1.75$. Left: $m=3$ (dashed line) and $m=4$ (solid line) using Chebyshev points, compared to straight lines having slope 4 (dashed) and 5 (solid). Right: $m=3$ (dashed line) and $m=4$ (solid line) using Gauss-Legendre points, compared to straight lines having slope 4 (dashed) and 5 (solid).}
\label{fig:prova_SIRS}
\end{figure}

\bigskip
Concerning the convergence of the SEM, it is not yet clear whether it can also be proved under the general framework used in the current work (see \cite[Subsection 4.4]{ab20mas} for a brief discussion concerning the RFDE case). However, some numerical experiments run by the authors suggest that the SEM does converge for periodic BVPs defined by REs. In the case of \eqref{sirsRE}, figure \ref{fig:SEM} shows a spectral decay of the error as $m$ increases while $L=1$ remains fixed, although some numerical instability can be observed when using large ($>35$) values of $m$.

\begin{figure}
%\sidecaption[t]
\centering
\includegraphics[scale=1.15]{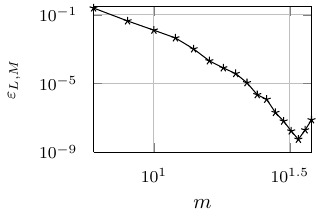}
\caption{Error on the periodic solution of \eqref{sirsRE} at $\log\gamma=1.75$ for $L=1$ using Chebyshev extrema as collocation points.}
\label{fig:SEM}
\end{figure}

\bigskip
Finally, consider the RE given by
\begin{equation}\label{C2notC3}
x(t)=\frac{\gamma}{2}\int_{-3}^{-1}h(x(t+\theta))\dd\theta,
\end{equation}
where
\begin{equation*}
h(x)=\left\{\setlength\arraycolsep{0.1em}\begin{array}{rl}
\displaystyle -x^3+\frac{3}{4}x,\quad &x<\displaystyle\frac{1}{2}\\[2mm]
x^3-3x^2+\displaystyle\frac{9}{4}x-\frac{1}{4},\quad &x\geq\displaystyle\frac{1}{2}.\
\end{array}
\right.
\end{equation*}
Observe that the function $h$ is in $C^2(\mathbb{R},\mathbb{R})$ but not in $C^3(\mathbb{R},\mathbb{R})$, due to a discontinuity of the third derivative at $x=1/2$. Using the method in \cite{sdv21}, it can be shown that a Hopf bifurcation occurs at $\gamma\approx 3.4031$.
\begin{figure}
%\sidecaption[t]
\centering
\includegraphics[scale=0.75]{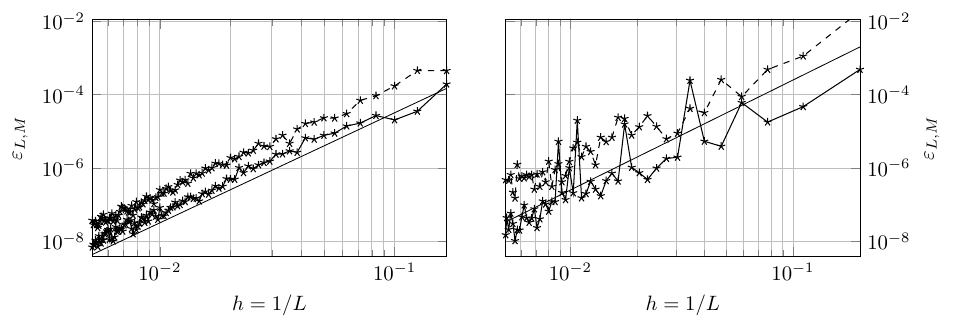}
\caption{Error on the periodic solution of \eqref{C2notC3} at $\gamma=6$. Left: $m=3$ (dashed line) and $m=4$ (solid line) using Chebyshev points, compared to straight line having slope 3. Right: $m=3$ (dashed line) and $m=5$ (solid line) using Gauss-Legendre points.}
\label{fig:prova_C2}
\end{figure}
Starting from a perturbation of the equilibrium at the Hopf bifurcation point, the branch of periodic orbits is continued up to $\gamma=6$. The integrals representing the distributed delays are again approximated through Clenshaw-Curtis quadrature, and the experiment is again performed using both Chebyshev and Gauss-Legendre collocation points. The error is computed with respect to a reference solution which is in turn computed using $L=400$ and $m=4$. Being the values for $m$ considered ($3$ and $4$) greater than $p=2$, the left panel of Figure \ref{fig:prova_C2} confirms the $O(h^{p+1})$ behavior in the former case, as the right one does in the latter.

\subsection{Applications}
\label{s_applications}
As anticipated, the examples in this section show how the method can be used within a continuation framework in view of a bifurcation analysis. The next RE that we consider to this aim is
\begin{equation}\label{expRE}
x(t)=\frac{\gamma}{2}\int_{-3}^{-1}x(t+\theta)e^{-x(t+\theta)}\dd\theta.
\end{equation}
As shown in \cite{sdv21}, a period doubling bifurcation occurs when $\log \gamma \approx 3.8777$. In \cite{bl20} a Floquet theory for REs has been developed and proved valid; in particular, the stability of a periodic solution is determined by the eigenvalues of the monodromy operator of the corresponding linearized equation, according to \cite[Corollary 15]{bl20}. Such eigenvalues can, in turn, be computed numerically thanks to the pseudospectral method in \cite{bl18}, which is used in order to obtain Figure \ref{fig:expRE}. As shown therein, indeed, two stable periodic solutions can be computed on opposite sides of $\log\gamma = 3.8777$, one having roughly double minimal period than the other, thus confirming the presence of a period doubling bifurcation.
\begin{figure}
%\sidecaption[t]
\centering
\includegraphics[scale=0.6]{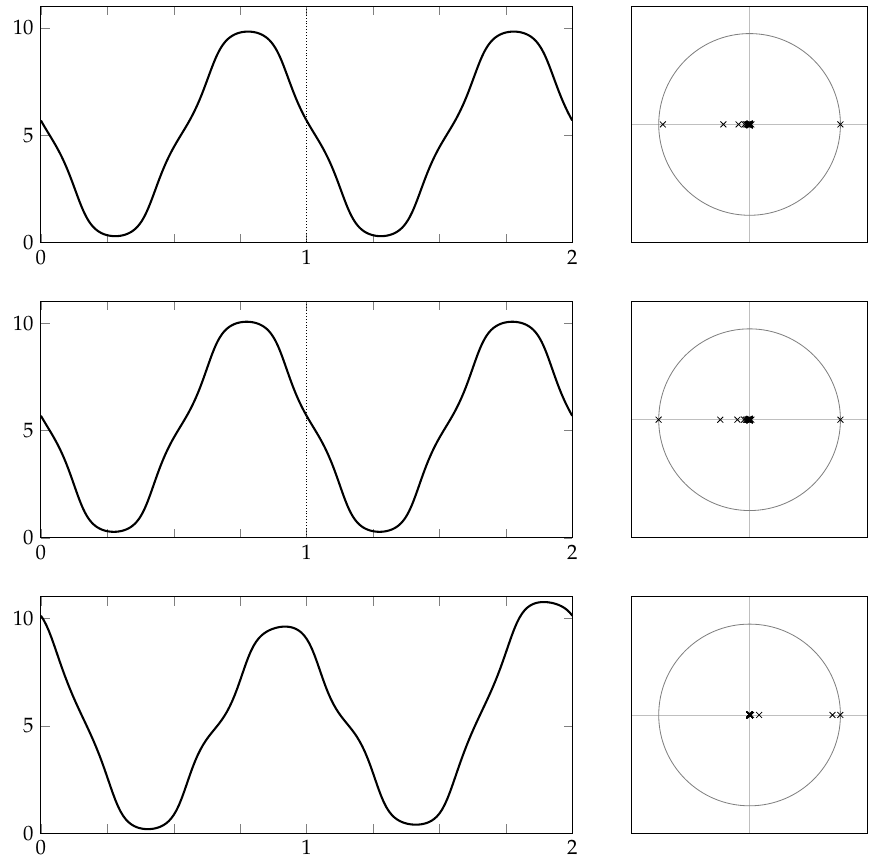}
\caption{Stable periodic solutions of \eqref{expRE} at $\log\gamma=3.83$ (top), $3.8777$ (middle, period doubling) and $3.9$ (bottom), having periods $\omega = 4,\, 4$ and $\approx 8.003$ respectively, computed with $L=20,\,20$ and $40$ and $m=5$. Left: representation of two periods (top, middle) and one period (bottom) of the solutions in the scaled interval $[0,2]$. Right: eigenvalues of the corresponding monodromy operator with respect to the unit circle, all internal in the first and third pictures with the exception of the trivial multiplier 1 (due to linearization, \cite[Proposition 10]{bl20}).}
\label{fig:expRE}
\end{figure}

\bigskip
Period doubling bifurcations also occur in the case \eqref{eq:re}, as shown in \cite{bdls16}. In particular, the second one after the Hopf bifurcation (near which it is not possible to obtain an exact expression of the solution) is detected at $\gamma\approx 4.497$. While, at that point, new stable periodic solutions emerge having double minimal period than the stable old ones, unstable solutions with  (roughly) unchanged period also exist, and can be computed using the method that we are proposing. Figure \ref{fig:quadRE}, indeed, shows that two periodic solutions which are very close to each other can be computed on opposite sides of $\gamma = 4.497$, one being stable and the other being unstable, thus confirming the presence of a bifurcation.
\begin{figure}
%\sidecaption[t]
\centering
\includegraphics[scale=0.6]{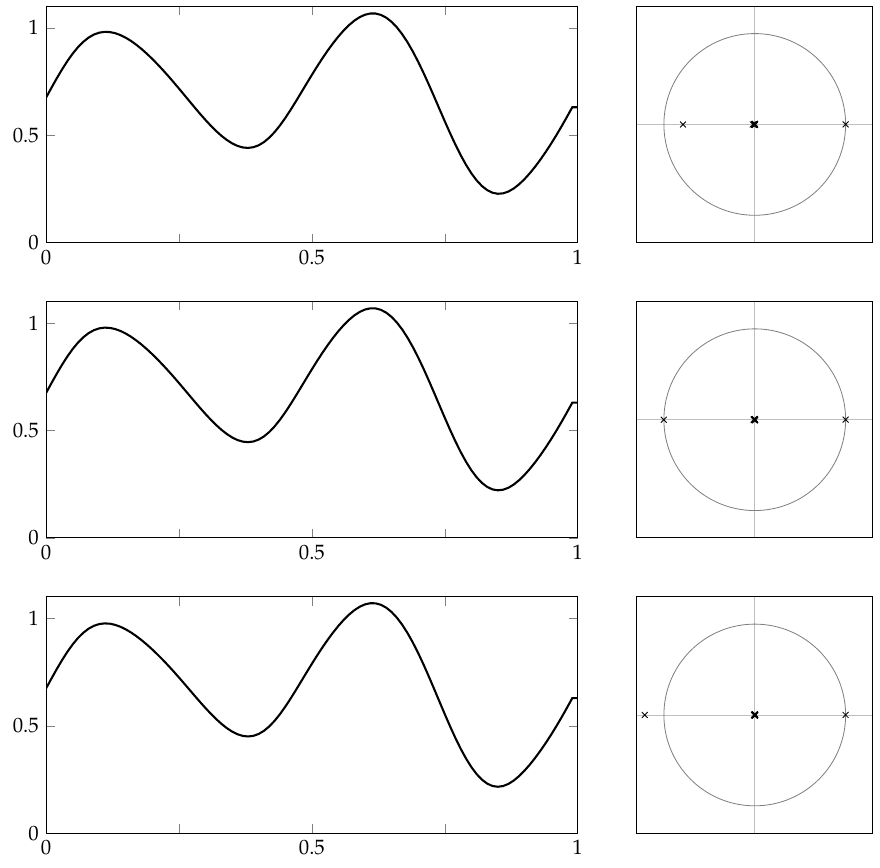}
\caption{Periodic solutions of \eqref{eq:re} at $\gamma=4.48$ (top), $4.497$ (middle, period doubling) and $4.51$ (bottom), having periods $\omega \approx 8.043,\, 8.049$ and $8.056$ repectively, computed with $L=20$ and $m=5$. Left: representation of one period of the solutions in the scaled interval $[0,1]$. Right: eigenvalues of the corresponding monodromy operator with respect to the unit circle, showing the change in stability.}
\label{fig:quadRE}
\end{figure}

\section{Concluding remarks}
\label{s_concluding}
In the past few decades, piecewise orthogonal collocation has been largely used to compute periodic solutions of various classes of delay equations. The present work aims at giving a rigorous description of such a method in the case of REs, while furnishing a complete theoretical error analysis as well as experimental proofs of its validity. In particular, the theoretical proof is based on the abstract approach in \cite{mas15NM} for general BVPs, as it is the case for the work \cite{ab20mas} concerning the corresponding proof for RFDEs. The main result concerns the FEM and states that, for smooth problems, the error decays as $O(L^{-(m+1)})$ where $L$ is the increasing number of mesh intervals, while $m$ is the constant degree of the piecewise polynomials used.

Given both the convergence analysis for RFDEs in \cite{ab20mas} and that in the present paper for REs, we expect to be able to rely on the general approach in \cite{mas15NM} also for the case of coupled RFDE/RE systems, motivated by their predominance in population dynamics. The proof is currently work in progress and the expected main challenge is given by the need to resort to the theory of VIEs with measure kernels \cite[Chapter 10]{grip09}.

Moreover, it would be interesting to extend the method (and, correspondingly, the convergence analysis) to different and more complex classes of delay equations. The first step that we plan to take in this direction is to try to apply the approach to differential equations with non-constant delays (in particular, state-dependent delays), for which the setting defined in \cite{ab20mas} cannot be applied.

%%===================================================%%
%% For presentation purpose, we have included        %%
%% \bigskip command. please ignore this.             %%
%%===================================================%%
\section*{Acknowledgments}
The authors are members of INdAM Research group GNCS and of UMI Research group “Modellistica socio-epidemiologica”. This work was partially supported by the Italian Ministry of University and Research (MUR) through the PRIN 2020 project (No. 2020JLWP23) “Integrated Mathematical Approaches to Socio-Epidemiological Dynamics” (CUP: E15F21005420006).
\appendix

\section{}
\label{s_appendix}
This Appendix completes the proof of Propositon \ref{p_Ax*2} by showing that $\xi_1$ introduced in the proof cannot belong to $R$.  To this aim, we observe that the monodromy operator in Propositon \ref{p_Ax*2} can also be defined in $L^1([-1,0],\mathbb{R}^d)$. The action of the operator remains the same, meaning that $X$ - or better, the corresponding quotient space obtained by considering almost-everywhere equality of functions - is invariant under such action. Thus, in the spectral decomposition $X=K\oplus R$ remains the same. Assume for a contradiction that $\xi_1^*\in R$, i.e., that $\mathfrak{M}_1^{*(0)}=\mathfrak{M}_1^*\in R$, and define $Y:=L^{\infty}([0,1];\mathbb{R}^{d})$\footnote{Here, elements in $\mathbb{R}^{d}$ are intended as row vectors.}. Consider the standard bilinear form $\langle\cdot,\cdot\rangle:Y\times X\to\mathbb{R}$ defined as
\begin{equation}\label{bilinear1}
\langle\psi,\alpha\rangle:=\int_{-1}^{0}\psi(r+\theta)\alpha(\theta)\dd\theta=\int_{0}^{1}\psi(\eta)\alpha(\eta-r)\dd\eta.
\end{equation}
As a general fact, any left eigenvector of some operator $A$ w.r.t. some eigenvalue $\lambda$ is orthogonal - in the corresponding bilinear form - to any element in the range of the operator $I-\lambda A$. This means that, by pairing $\mathfrak{M}_1^*$ with any left eigenvector $\psi$ of $T^{\ast}(1,0)$ with respect to \eqref{bilinear1}, we get
\begin{equation}\label{contra}
\int_{0}^{1}\psi(\eta)\mathfrak{M}^{\ast}(\eta)\dd\eta=\int_{0}^{1}\psi(\eta)\mathfrak{M}_{1}^{\ast}(\eta-1)\dd\eta=\langle\psi,\mathfrak{M}_{1}^{\ast}\rangle=0.
\end{equation}
In order to obtain a contradiction with \eqref{contra}, we resort to adjoint theory for VIEs (see \cite{grip09} as a general reference). Indeed, any RE of the form
\begin{equation*}
x(t)=\int_{t-r}^{t}K^{\ast}(t,\sigma-t)x(\sigma)\dd\sigma,\quad t\geq t_{0},
\end{equation*}
with $x_{t_{0}}=\alpha$ for some $\alpha\in X:=L^{1}([-1,0];\mathbb{R}^{d})$\footnote{In our case $K^{\ast}(t,\sigma-t):=\omega^{\ast}D_{2}K(\omega^{\ast}(\sigma-t),v^{\ast}(\sigma))$ for $t\geq t_{0}$ and $\sigma\in[t-r,t]$ with $r:=\tau/\omega^{\ast}\leq1$.} and $r\leq 1$ can be written as the VIE
\begin{equation}\label{VIE}
x(t)=\int_{t_{0}}^{t}K_{0}^{\ast}(t,\sigma)x(\sigma)\dd\sigma+f(t),\quad t\geq t_{0},
\end{equation}
where
\begin{equation*}
K_{0}^{\ast}(t,\sigma):=\begin{cases}
K^{\ast}(t,\sigma-t),&t\geq t_{0}\textrm{ and }\sigma\in[t-r,t],\\
0,&\textrm{otherwise},
\end{cases}
\end{equation*}
\begin{equation*}
\alpha_{0}(t_{0}+\theta):=\begin{cases}
\alpha(\theta),&\theta\in[-1,0],\\
0,&\textrm{otherwise},
\end{cases}
\end{equation*}
and
\begin{equation*}
f(t):=\int_{t-1}^{t_{0}}K_{0}^{\ast}(t,\sigma)\alpha_{0}(\sigma)\dd\sigma,\quad t\geq t_{0}.
\end{equation*}
Existence and uniqueness \cite[Chapter 9]{grip09} allow to define the forward evolution family $\{T(t,t_{0})\}_{t\geq t_{0}}$ on $X$ through $T(t,t_{0})\alpha=x_{t}$. From \cite[Exercise 6, p.274]{grip09}, we have the adjoint VIE\footnote{It is enough to consider the integrals at the right-hand side of the VIE and of its adjoint over the whole of $\mathbb{R}$, by taking into account the definition of $K_{0}^{\ast}$.}
\begin{equation}\label{adjeq}
y(s)=\int_{s}^{s_{0}}y(\sigma)K_{0}^{\ast}(\sigma,s)\dd\sigma+g(s),\quad s\leq s_{0},
\end{equation}
with
\begin{equation*}
g(s):=\int_{s_{0}}^{s+1}\psi_{0}(\sigma)K_{0}^{\ast}(\sigma,s)\dd\sigma,\quad s\leq s_{0},
\end{equation*}
for
\begin{equation*}
\psi_{0}(s_{0}+\eta):=\begin{cases}
\psi(\eta),&\eta\in[0,1],\\
0,&\textrm{otherwise},
\end{cases}
\end{equation*}
and $y^{s_{0}}=\psi$ for some $\psi\in Y$, where we use the notation $y^s(\eta):=y(s+\eta)$ for $\eta\in[0,r]$. Then, one defines the backward evolution family $\{V(s,s_{0})\}_{s\leq s_{0}}$ on $Y$ through $V(s,s_{0})\psi=y^{s}$.

\bigskip
From the theory of resolvents \cite[Chapter 9, Section 3]{grip09}, we can express the solution of \eqref{VIE} and that of \eqref{adjeq} respectively as
\begin{equation}\label{xf}
x(t)=f(t)+\int_{t_{0}}^{t}R_{0}^{\ast}(t,\sigma)f(\sigma)\dd\sigma,\quad t\geq t_{0},
\end{equation}
and
\begin{equation}\label{yg}
y(s)=g(s)+\int_{s}^{s_{0}}g(\sigma)R_{0}^{\ast}(\sigma,s)\dd\sigma,\quad s\leq s_{0},
\end{equation}
where $R_{0}^{\ast}$ is the resolvent of \eqref{VIE}.

\bigskip
Given $t\in\mathbb{R}$, consider now the pairing $[\cdot,\cdot]_{t}:Y\times X\to\mathbb{R}$ defined as
\begin{equation}\label{bilinear2}
[\psi,\alpha]_{t}:=\int_{0}^{1}\psi(\eta)\int_{-1}^{0}K_0^{\ast}(t+\eta,t+\beta)\alpha(\beta)\dd\beta\dd\eta.
\end{equation}

\noindent Observe that such bilinear form is nondegenerate for all $t\in\mathbb{R}$ whenever $K^*$ (and thus $K_0^*$) is nontrivial. Indeed, assume by contradiction that there exists $\psi\in Y$ such that $\psi$ is nonzero, but $[\psi,\cdot]_t$ is constantly 0. By the nondegenerateness of the bilinear form $\langle\cdot,\cdot\rangle$, this means that the innermost integral is 0 for all $\alpha\in X$ and almost all $\eta\in[0,1]$. If $\alpha:=x_t$, where $x$ is the (unique modulo multiplication by constant) 1-periodic solution of the VIE, then such integral is equal to
\begin{equation*}
\int_{-1}^0K_0^*(t+\eta,t+\beta)x(t+\beta)\dd\beta=\int_{t-1}^tK_0^*(t+\eta,\beta)x(\beta)\dd\beta=x(t+\eta).
\end{equation*}
Thus $x_{t+1}$ is almost everywhere equal to 0. Using periodicity, this means that $x$ is almost everywhere 0, which is only possible if $K^*$ is trivial, contradiction. Using similar arguments, one can prove that there is no nonzero $\alpha\in X$ such that $[\cdot,\alpha]_t$ is constantly zero, after exchanging the order of integration in the definition of $[\cdot,\cdot]_t$.

\bigskip
We claim that the forward monodromy operator and the corresponding backward one are adjoint w.r.t. \eqref{bilinear2}, i.e., that
\begin{equation}\label{VT2}
[V(t-1,t)\psi,\alpha]_{t}=[\psi,T(t+1,t)\alpha]_{t}.
\end{equation}
Indeed, using \eqref{yg}, we have
\begin{equation*}
\setlength\arraycolsep{0.1em}\begin{array}{rcl}
[V(t-1,t)\psi,\alpha]_{t}&=&\displaystyle\int_{0}^{1}[V(t-1,t)\psi](\eta)\int_{-1}^{0}K_0^{\ast}(t+\eta,t+\beta)\alpha(\beta)\dd\beta\dd\eta\\[4mm]
&=&\displaystyle\int_{0}^{1}y(t-1+\eta;t,\psi)\int_{-1}^{0}K_0^{\ast}(t+\eta,t+\beta)\alpha(\beta)\dd\beta\dd\eta\\[4mm]
&=&\displaystyle\int_{0}^{1}g(t-1+\eta)\int_{-1}^{0}K_0^{\ast}(t+\eta,t+\beta)\alpha(\beta)\dd\beta\dd\eta\\[4mm]
&&+\displaystyle\int_{0}^{1}\int_{t-1+\eta}^{t}g(\sigma)R_{0}^{\ast}(\sigma,t-1+\eta)\dd\sigma\\[4mm]
&&\displaystyle\int_{-1}^{0}K_0^{\ast}(t+\eta,t+\beta)\alpha(\beta)\dd\beta\dd\eta
\\[4mm]
&=&A+B
\end{array}
\end{equation*}
for
\begin{equation*}
A:=\int_{0}^{1}g(t-1+\eta)\int_{-1}^{0}K_0^{\ast}(t+\eta,t+\beta)\alpha(\beta)\dd\beta\dd\eta
\end{equation*}
and 
\begin{equation*}
B:=\int_{0}^{1}\int_{t-1+\eta}^{t}g(\sigma)R_{0}^{\ast}(\sigma,t-1+\eta)\dd\sigma\int_{-r}^{0}K_0^{\ast}(t+\eta,t+\beta)\alpha(\beta)\dd\beta\dd\eta.
\end{equation*}
As for $A$, we have
\begin{equation*}
\setlength\arraycolsep{0.1em}\begin{array}{rcl}
A&=&\displaystyle\int_{0}^{1}\int_{t}^{t+\eta}\psi_{0}(\sigma)K_{0}^{\ast}(\sigma,t-1+\eta)\dd\sigma\int_{-1}^{0}K_0^{\ast}(t+\eta,t+\beta)\alpha(\beta)\dd\beta\dd\eta\\[4mm]
&=&\displaystyle\int_{0}^{1}\int_{0}^{\eta}\psi_{0}(t+\sigma)K_{0}^{\ast}(t+\sigma,t-1+\eta)\dd\sigma\int_{-1}^{0}K_0^{\ast}(t+\eta,t+\beta)\alpha(\beta)\dd\beta\dd\eta\\[4mm]

&=&\displaystyle\int_{0}^{1}\psi_{0}(t+\sigma)\int_{\sigma}^{1}K_{0}^{\ast}(t+\sigma,t-1+\eta)\int_{-1}^{0}K_0^{\ast}(t+\eta,t+\beta)\alpha(\beta)\dd\beta\dd\eta\dd\sigma\\[4mm]
&=&\displaystyle\int_{0}^{1}\psi(\sigma)\int_{\sigma}^{1}K_{0}^{\ast}(t+\sigma,t-1+\eta)\int_{-1}^{0}K_0^{\ast}(t+\eta,t+\beta)\alpha(\beta)\dd\beta\dd\eta\dd\sigma\\[4mm]
&=&\displaystyle\int_{0}^{1}\psi(\sigma)\int_{-1}^{0}\int_{\sigma}^{1}K_{0}^{\ast}(t+\sigma,t-1+\eta)K_0^{\ast}(t+\eta,t+\beta)\dd\eta\alpha(\beta)\dd\beta\dd\sigma\\[4mm]
&=&\displaystyle\int_{0}^{1}\psi(\sigma)\int_{-1}^{0}\int_{0}^{1}K_{0}^{\ast}(t+\sigma,t-1+\eta)K_0^{\ast}(t+\eta,t+\beta)\dd\eta\alpha(\beta)\dd\beta\dd\sigma\\[4mm]
&=&\displaystyle\int_{0}^{1}\psi(\sigma)\int_{-1}^{0}\int_{-1}^{0}K_0^{\ast}(t+\sigma,t+\eta)K_{0}^{\ast}(t+1+\eta,t+\beta)\dd\eta\ \alpha(\beta)\dd\beta\dd\sigma\\[4mm]
&=&\displaystyle\int_{0}^{1}\psi(\eta)\int_{-1}^{0}\int_{-1}^{0}K_0^{\ast}(t+\eta,t+\beta)K_{0}^{\ast}(t+1+\beta,t+\sigma)\dd\beta\ \alpha(\sigma)\dd\sigma\dd\eta\\[4mm]
&=&\displaystyle\int_{0}^{1}\psi(\eta)\int_{-1}^{0}\int_{-1}^{\sigma}K_0^{\ast}(t+\eta,t+\beta)K_{0}^{\ast}(t+1+\beta,t+\sigma)\dd\beta\ \alpha(\sigma)\dd\sigma\dd\eta\\[4mm]

&=&\displaystyle\int_{0}^{1}\psi(\eta)\int_{-1}^{0}K_0^{\ast}(t+\eta,t+\beta)\int_{\beta}^{0}K_{0}^{\ast}(t+1+\beta,t+\sigma)\alpha_{0}(t+\sigma)\dd\sigma\dd\beta\dd\eta\\[4mm]

&=&\displaystyle\int_{0}^{1}\psi(\eta)\int_{-1}^{0}K_0^{\ast}(t+\eta,t+\beta)\int_{t+\beta}^{t}K_{0}^{\ast}(t+1+\beta,\sigma)\alpha_{0}(\sigma)\dd\sigma\dd\beta\dd\eta\\[4mm]

&=&\displaystyle\int_{0}^{1}\psi(\eta)\int_{-1}^{0}K_0^{\ast}(t+\eta,t+\beta)f(t+1+\beta)\dd\beta\dd\eta,
\end{array}
\end{equation*}
where the first equality comes from the definition of $g$, the second follows from the substitution $\sigma\leftarrow t+\sigma$, the third is obtained by exchanging the order of integration between $\eta$ and $\sigma$, the fourth follows from the definition of $\psi_{0}$, the fifth is obtained  by exchanging the order of integration between $\eta$ and $\beta$, the sixth is due to the fact that $K_{0}^{\ast}(t+\sigma,t-1+\eta)$ vanishes for $\eta<\sigma$, the seventh follows from the substitution $\eta\leftarrow 1+\eta$, the eighth is obtained by just renaming the variables, the ninth is due to the fact that $K_0^*(t+1+\beta,t+\sigma)$ vanishes for $\beta> \sigma$, the tenth is obtained by exchanging the order of integration between $\beta$ and $\sigma$, the eleventh follows from the substitution $\sigma\leftarrow \sigma-t$ and the last comes from the definition of $f$. As for $B$, we have
\begin{equation*}
\setlength\arraycolsep{0.1em}\begin{array}{rcl}
B&=&\displaystyle\int_{0}^{1}\int_{t-1+\eta}^{t}\int_0^{\sigma+1-t}\psi(\theta)K_0^*(t+\theta,\sigma)\dd\theta R_{0}^{\ast}(\sigma,t-1+\eta)\dd\sigma\ h(t,\eta)\dd\eta\\[4mm]
&=&\displaystyle\int_{0}^{1}\int_{0}^{\eta}\psi(\theta)\int_{t-1+\eta}^{t}K_0^*(t+\theta,\sigma) R_{0}^{\ast}(\sigma,t-1+\eta)\dd\sigma\dd\theta\ h(t,\eta)\dd\eta\\[4mm]
&&+\displaystyle\int_{0}^{1}\int_{\eta}^{1}\psi(\theta)\int_{t-1+\theta}^{t}K_0^*(t+\theta,\sigma) R_{0}^{\ast}(\sigma,t-1+\eta)\dd\sigma\dd\theta\ h(t,\eta)\dd\eta\\[4mm]
&=&\displaystyle\int_{0}^{1}\int_{0}^{\eta}\psi(\theta)\int_{t-1}^{t}K_0^*(t+\theta,\sigma) R_{0}^{\ast}(\sigma,t-1+\eta)\dd\sigma \dd\theta h(t,\eta)\dd\eta\\[4mm]
&&+\displaystyle\int_{0}^{1}\int_{\eta}^{r}\psi(\theta)\int_{t-1}^{t}K_0^*(t+\theta,\sigma) R_{0}^{\ast}(\sigma,t-1+\eta)\dd\sigma\dd\theta\ h(t,\eta)\dd\eta.\\[4mm]
&=&\displaystyle\int_{0}^{1}\int_{0}^{1}\psi(\theta)\int_{t-1}^{t}K_0^*(t+\theta,\sigma) R_{0}^{\ast}(\sigma,t-1+\eta)\dd\sigma\dd\theta\ h(t,\eta)\dd\eta\\[4mm]
&=&\displaystyle\int_{0}^{1}\int_{0}^{1}\psi(\eta)\int_{t-1}^{t}K_0^*(t+\eta,\beta) R_{0}^{\ast}(\beta,t-1+\sigma)\dd\beta\dd\eta\ h(t,\sigma)\dd\sigma\\[4mm]
%&=&\displaystyle\int_{0}^{1}\int_{0}^{1}\psi(\theta)\int_{t-1}^{t}R_0^*(t+\theta,\sigma) K_{0}^{\ast}(\sigma,t-1+\eta)\dd\sigma\dd\theta\ h(t,\eta)\dd\eta\\[4mm]
%&=&\displaystyle\int_{0}^{1}\psi(\theta)\int_{t-1}^{t}R_0^*(t+\theta,\sigma)\int_{0}^{1} K_{0}^{\ast}(\sigma,t-1+\eta)h(t,\eta)\dd\eta\dd\sigma\dd\theta\\[4mm]
%&=&\displaystyle\int_{0}^{1}\psi(\eta)\int_{t-1}^{t}R_0^{\ast}(t+\eta,\beta)\int_{0}^{1}K_{0}^{\ast}(\beta,t+\sigma-1)h(t,\sigma)\dd\sigma\dd\beta\dd\eta\\[4mm]
%&=&\displaystyle\int_{0}^{1}\psi(\eta)\int_{0}^{1}\int_{t-1}^{t}R_0^{\ast}(t+\eta,\beta)K_{0}^{\ast}(\beta,t+\sigma-1)h(t,\sigma)\dd\beta\dd\sigma\dd\eta\\[4mm]
%&=&\displaystyle\int_{0}^{1}\psi(\eta)\int_{0}^{1}\int_{t-1}^{t}K_0^{\ast}(t+\eta,\beta)R_{0}^{\ast}(\beta,t+\sigma-1)h(t,\sigma)\dd\beta\dd\sigma\dd\eta\\[4mm]
&=&\displaystyle\int_{0}^{1}\psi(\eta)\int_{t-1}^{t}K_0^{\ast}(t+\eta,\beta)\int_{0}^{1}R_{0}^{\ast}(\beta,t+\sigma-1)h(t,\sigma)\dd\sigma\dd\beta\dd\eta\\[4mm]
&=&\displaystyle\int_{0}^{1}\psi(\eta)\int_{t-1}^{t}K_0^{\ast}(t+\eta,\beta)\int_{0}^{1}R_{0}^{\ast}(1+\beta,t+\sigma)\\[4mm]
&&\displaystyle\int_{-1}^0K_0^*(t+\sigma,t+\theta)\alpha(\theta)\dd\theta\dd\sigma\dd\beta\dd\eta\\[4mm]
&=&\displaystyle\int_{0}^{1}\psi(\eta)\int_{t-1}^{t}K_0^{\ast}(t+\eta,\beta)\int_{t}^{t+1}R_{0}^{\ast}(1+\beta,\sigma)\\[4mm]
&&\displaystyle\int_{-1}^0K_0^*(\sigma,t+\theta)\alpha(\theta)\dd\theta\dd\sigma\dd\beta\dd\eta
\\[4mm]
&=&\displaystyle\int_{0}^{1}\psi(\eta)\int_{t-1}^{t}K_0^{\ast}(t+\eta,\beta)\int_{t}^{1+\beta}R_{0}^{\ast}(1+\beta,\sigma)\\[4mm]
&&\displaystyle\int_{\sigma-t-1}^0K_0^*(\sigma,t+\theta)\alpha(\theta)\dd\theta\dd\sigma\dd\beta\dd\eta\\[4mm]
&=&\displaystyle\int_{0}^{1}\psi(\eta)\int_{-1}^{0}K_0^{\ast}(t+\eta,t+\beta)\int_{t}^{t+1+\beta}R_{0}^{\ast}(t+1+\beta,\sigma)\\[4mm]
&&\displaystyle\int_{\sigma-1}^tK_0^*(\sigma,\theta)\alpha_0(\theta)\dd\theta\dd\sigma\dd\beta\dd\eta\\[4mm]
&=&\displaystyle\int_{0}^{1}\psi(\eta)\int_{-1}^{0}K_0^{\ast}(t+\eta,t+\beta)\int_{t}^{t+1+\beta}R_{0}^{\ast}(t+1+\beta,\sigma)f(\sigma)\dd\sigma\dd\beta\dd\eta,
\end{array}
\end{equation*}
where the first equality comes from the definition of $g$, the second is obtained by exchanging the order of integration between $\theta$ and $\sigma$, the third is due to the fact that $R_{0}^{\ast}(\sigma,t-1+\eta)$ vanishes if $\sigma<t-1+\eta$ and $K_0^*(t+\theta,\sigma)$ vanishes if $\sigma<t-1+\theta$, the sixth is obtained by changing the order of integration, the seventh follows from the $1$-periodicity of $R_0^*$, the eighth follows from the substitution $\sigma\leftarrow \sigma-t$, the ninth is due to the fact that $R_{0}^{\ast}(1+\beta,\sigma)$ vanishes if $\sigma>1+\beta$ and $K_0^*(\sigma,t+\theta)$ vanishes if $\theta<\sigma-t-1$, the tenth follows from the substitutions $\theta\leftarrow\theta-t$ and $\beta\leftarrow \beta-t$ and the last comes from the definition of $f$. Eventually, using \eqref{xf}, we have
\begin{equation*}
\setlength\arraycolsep{0.1em}\begin{array}{rcl}
A+B&=&\displaystyle\int_{0}^{1}\psi(\eta)\int_{-1}^{0}K_0^{\ast}(t+\eta,t+\beta)f(t+1+\beta)\dd\beta\dd\eta\\[4mm]
&&+\displaystyle\int_{0}^{1}\psi(\eta)\int_{-1}^{0}K_0^{\ast}(t+\eta,t+\beta)\\[4mm]
&&\displaystyle\int_{t}^{t+1+\beta}R_{0}^{\ast}(t+1+\beta,\sigma)f(\sigma)\dd\sigma\dd\beta\dd\eta\\[4mm]
&=&\displaystyle\int_{0}^{1}\psi(\eta)\int_{-1}^{0}K_0^{\ast}(t+\eta,t+\beta)x(t+1+\beta;t,\alpha)\dd\beta\dd\eta\\[4mm]
&=&\displaystyle\int_{0}^{1}\psi(\eta)\int_{-1}^{0}K_0^{\ast}(t+\eta,t+\beta)[T(t+1,t)\alpha](\beta)\dd\beta\dd\eta\\[4mm]
&=&[\psi,T(t+1,t)\alpha]_{t},
\end{array}
\end{equation*}
which proves \eqref{VT2}.

\bigskip
Under the assumption that $1$ is a simple eigenvalue, both the VIE and its adjoint have a unique $1$-periodic solution modulo multiplication by some constant, say $x$ and $y$ respectively. Thus, the associated states $y^t$ and $x_t$ are respectively the left and right $1$-eigenvectors of the operator $T(t+1,t)$. Again thanks to their uniqueness, we have $[y^t,x_t]_t\neq 0$ for all $t\in\mathbb{R}$. Moreover, the continuity of the map $t\mapsto[y^t,x_t]_t$ and the mean value theorem for definite integrals let us conclude that $\int_0^1[y^t,x_t]_t\dd t\neq 0$. Finally, observe that
\begin{equation*}
\setlength\arraycolsep{0.1em}\begin{array}{rcll}
\mathfrak{M}_{}^{\ast}(t)&=&\displaystyle \frac{\dd}{\dd\omega}\left[\int_{-\tau}^{0}K\left(\sigma,v\left(t+\frac{\sigma}{\omega}\right)\right)\dd \sigma\right]\Bigg\vert_{(\omega,v)=(\omega^*,v^*)}\\[3mm]
&=&\displaystyle\int_{-\tau}^{0}D_2K\left(\sigma,v^*\left(t+\frac{\sigma}{\omega^*}\right)\right)v^*\left(t+\frac{\sigma}{\omega^*}\right)\left(-\frac{\sigma}{\omega^{*2}}\right)\dd \sigma\\[3mm]
&=&\displaystyle -\frac{1}{\omega^*}\int_{-\tau}^{0}D_2K\left(\sigma,v^*\left(t+\frac{\sigma}{\omega^*}\right)\right)(v^*)'\left(t+\frac{\sigma}{\omega^*}\right)\left(\frac{\sigma}{\omega^{*}}\right)\dd \sigma\\[3mm]
&=&\displaystyle -\frac{1}{\omega^*}\cdot\omega^*\int_{-\frac{\tau}{\omega^*}}^{0}D_2K(\omega^*\theta,v^*(t+\theta))(v^*)'(t+\theta)\theta\dd \theta\\[3mm]
&=&\displaystyle -\int_{-\frac{\tau}{\omega^*}}^{0}D_2K(\omega^*\theta,v^*(t+\theta))(v^*)'(t+\theta)\theta\dd \theta\\[3mm]
&=&\displaystyle -\frac{1}{\omega^*}\int_{-1}^{0}K_0^*(t,t+\theta)(v^*)'(t+\theta)\theta\dd \theta,
\end{array}
\end{equation*}
where $(v^*)'$ is indeed $x$. Thus
\begin{equation*}
\setlength\arraycolsep{0.1em}\begin{array}{rcl}
0\neq\displaystyle\int_{0}^{1}[y^{t},x_{t}]_{t}\dd t&=&\displaystyle\int_{0}^{1}\int_{0}^{1}y(t+\eta)\int_{-1}^{0}K_0^{\ast}(t+\eta,t+\beta)x(t+\beta)\dd\beta\dd\eta\dd t\\[4mm]
&=&\displaystyle\int_{0}^{1}y(t)\int_{0}^{1}\int_{-1}^{0}K_0^{\ast}(t,t+\beta-\eta)x(t+\beta-\eta)\dd\beta\dd\eta\dd t\\[4mm]
&=&\displaystyle\int_{0}^{1}y(t)\int_{0}^{1}\int_{-\eta-1}^{-\eta}K_0^{\ast}(t,t+\theta)x(t+\theta)\dd\theta\dd\eta\dd t\\[4mm]
&=&\displaystyle\int_{0}^{1}y(t)\int_{0}^{1}\int_{-1}^{-\eta}K_0^{\ast}(t,t+\theta)x(t+\theta)\dd\theta\dd\eta\dd t\\[4mm]
&=&\displaystyle\int_{0}^{1}y(t)\int_{-1}^{0}\int_{0}^{-\theta}K_0^{\ast}(t,t+\theta)x(t+\theta)\dd\eta\dd\theta\dd t\\[4mm]
&=&\displaystyle\int_{0}^{1}y(t)\int_{-1}^{0}K_0^{\ast}(t,t+\theta)x(t+\theta)\int_{0}^{-\theta}\dd\eta\dd\theta\dd t\\[4mm]
&=&-\displaystyle\int_{0}^{1}y(t)\int_{-1}^{0}K_0^{\ast}(t,t+\theta)x(t+\theta)\theta\dd\theta\dd t\\[4mm]
&=&\omega^*\displaystyle\int_{0}^{1}y(t)\mathfrak{M}^{\ast}(t)\dd t,
\end{array}
\end{equation*}
which contradicts \eqref{contra} thanks to the fact that $y^0$ is the left $1$-eigenvector of $T^*(1,0)$.

% common bib file
%% if required, the content of .bbl file can be included here once bbl is generated
%%\input sn-article.bbl

%% Default %%
%%\input sn-sample-bib.tex%

\end{document}